\documentclass[10pt]{article}
\usepackage{mathrsfs}
\usepackage{amsfonts}
\usepackage{amsthm,amsmath,amssymb,anysize}
\newtheorem{lemma}{Lemma}[section]
\newtheorem{theorem}[lemma]{Theorem}

\newtheorem{proposition}[lemma]{Proposition}
\newtheorem{definition}[lemma]{Definition}
\newtheorem{corollary}[lemma]{Corollary}
\setlength{\parindent}{1em} \setlength{\baselineskip}{20pt}
\usepackage[numbers,sort&compress]{natbib}

\pagestyle{myheadings}
\marginsize{3.6cm}{3.6cm}{3.6cm}{3cm}
\numberwithin{equation}{section}

\markright{ Super-biderivations }
\title{\textsf{Derivations of the Positive Part of the Two-parameter Quantum Group of type $G_2$}}
\author{
\textsc{Yongyue Zhong and  Xiaomin Tang }   \\
      }
\date{ }
\begin{document}
\maketitle
\begin{quotation}
\noindent\textbf{Abstract.}
  In this paper, we compute the derivations of the positive part of the two-parameter quantum group of type $G_2$ by embedding it into a quantum torus. We also show that the Hochschild cohomology group of degree $1$ of this algebra is a two dimensional vector space over the complex field.
\vskip 5pt

\end{quotation}

\footnotetext{
 \emph{2010 Mathematics Subject Classification.} 17B37, 17B62, 17B50.

 \emph{Key words and phrases.}   Two-parameter quantum group; Derivation; Hochschild cohomology group

 This work was supported in part by the NNSF of China [grant number 11771069],  and the Fund of Heilongjiang Provincial Laboratory of the Theory and Computation of Complex Systems.
}

  \setcounter{section}{0}
\section{Introduction}
 \quad
 Throughout the paper, the symbols $\mathbb{C}$,$\mathbb{N}$ and $\mathbb{Z}$ represent for the set of complex numbers, nonnegative integers and integers, respectively.  Let $\mathfrak{g}$ be a finite dimensional complex simple Lie algebra and let $r, s \in \mathbb{C}^*:=\mathbb{C}\setminus\{0\}$. The two-parameter quantum groups (or quantized enveloping algebras) $U_{r,s}(g)$  have been introduced in the literatures \cite{Bergeron, Benk1,Benk2} and the references therein. Overall, the two-parameter quantized enveloping algebras $U_{r,s}(g)$ are close analogues of their one-parameter peers, and share a similar structural and representation theory as the one-parameter quantized enveloping algebras $U_q(g)$.
For instance, the author in \cite{tangxin0} prove that the positive part of the two-parameter quantized enveloping algebra $U_{r,s}^+(g)$ is isomorphic to certain two-parameter Ringel-Hall algebra.

 Nonetheless, there are some differences between the one-parameter quantum groups and two-parameter quantum groups. The two-parameter quantum groups $U_{r,s}(g)$ are more rigid in that they possess less symmetry. In particular, the center of two-parameter quantum groups $U_{r,s}(g)$ already posed a different picture \cite{Benk3}.  On the other hand, these differences also make it plausible to more effectively study the structures of these algebras.
   In \cite{Hu2007}, the author has studied the convex PBW-type bases of two-parameter quantum groups of typical Lie algebra $B_n$, $F_4$ and $G_2$, respectively. It frequently inspires new directions of study in the two-parameter quantum groups. Essentially, the two-parameter quantized enveloping algebra $U_{r, s}(\mathfrak{g})$ and one-parameter quantized enveloping $U_q(\mathfrak{g})$ have similar structural and representation theory. In \cite{Fan}, Fan studied the intimate relationship revealed between a two-parameter quantum algebra and its one-parameter analog by the specialization at $t=1$.

In recent years, more studies have been conducted toward their subalgebras such as $U_{r,s}^+(g)$, and the augmented Hopf algebras $U_{r,s}^{\ge 0}(g)$. The authors has computed the derivations of the two-parameter quantized enveloping algebra $U_{r,s}^+(B_2)$ and the algebra automorphisms of the Hopf algebra $\breve{U}_{r,s}^{\geq 0}(sl_3)$ in \cite{tangxin1,tangxin2}. A similar work has been carried out for the algebra $U^+_{r,s} (B_3)$ and the Hopf algebra $\breve{U}^{\geq 0}_{r,s} (B_3)$ in \cite{limin}. The same problem on subalgebra of $U_{r,s}(G_2)$ is worth studying, i.e., to compute the derivations of  $U_{r,s}^+(G_2)$ and the algebra automorphisms of  $\breve{U}_{r,s}^{\geq 0}(G_2)$. In particular, in \cite{tangxin3} the author study the group of algebra automorphisms  for $\breve{U}_{r,s}^{\geq 0}(G_2)$ and prove that the group of Hopf algebra automorphisms  for $\breve{U}_{r,s}^{\geq 0}(G_2)$ is isomorphic to a torus of rank two. But the structure of derivations for  $U_{r,s}^+(G_2)$ was still unknown. In this paper, we will study this problem, that is, we want to determine the structure of derivation algebra of $U_{r, s}^+(G_2)$. In order to calculate the derivation, we first embed $U_{r, s}^+(G_2)$ into a quantum torus with know derivations, and then via this embedding we shall be able to pull the information on derivations back to the algebra $U_{r, s}^+(G_2)$.

\section{Some basic properties of the $U_{r,s}^+ (G_2)$}

In this section, we recall the definitions of the algebra $U_{r,s}^+ (G_2)$.
We also present some facts and notation which will be used throughout the paper.

\subsection{The algebra $U_{r, s}^+(G_2)$}\label{section2.1}

\qquad In the paper, we will always assume that the parameter $r, s$ are chosen from $\mathbb{C}^*$ such that $r^ms^n=1$ implies $m=n=0$ and use the notations $\xi:=r^2-s^2+rs$, $\eta:=r^2-s^2-rs$, $\zeta:=(r^3-s^3)(r+s)^{-1}$, $[\![k, l]\!]:=\{k, k+1, ....,l-1, l\}$ for any positive integers $k$ and $l$ with $k<l$. 
\quad  Recall that two-parameter quantum group $U_{r, s}(G_2)$ associated to the complex simple Lie algebras of type $G_2$ have studied in \cite{Hu2007}.

\begin{definition}
The two-parameter quantized  group $U^+:=U^+_{r, s}(G_2)$ is defined to be the $\mathbb{C}$-algebra generated by the generators  generators $e_1$ and $e_2$,
subject to the quantum Serre relations:
\begin{eqnarray*}
&&e_2^2e_1-(r^{-3}+s^{-3})e_2e_1e_2+(rs)^{-3}e_1e_2^2=0, \label{r1}\\
&&e_1^4e_2-(r+s)(r^2+s^2)e_1^3e_2e_1+(rs)^6e_2e_1^4\\
 &&\ \ \ +rs(r^2+s^2)(r^2+rs+s^2)e_1^2e_2e_1^2 -(rs)^3(r+s)(r^2+s^2)e_1e_2e_1^3=0.\label{r2}
\end{eqnarray*}
\end{definition}

The following definition of a good numbering is given by Lusztig in [16, Subsection 4.3]:
\begin{definition}
Let $\Phi$ be a root system of a complex simple Lie algebra with $\Phi^+$ a positive root system and $\Pi=\{\alpha_1,...,\alpha_n\}$ a base of simple roots and $i\in [\![1, n]\!]$. A simple root $\alpha_i \in \Pi$ is said to be special if the coefficient with which $\alpha_i$ appears in any $\beta \in \Phi^+$ (expressed as $\mathbb{N}$-linear combination of simple roots) is $\leq$ 1. A simple root $\alpha_i \in \Pi$ is said to be semispecial if the coefficient with which $\alpha_i$ appears in any $\beta \in \Phi^+$
is $\leq$ 1, except for a single root $\beta$, for which the coefficient is necessarily $2$.
The numbering of the rows and columns of the Cartan matrix (or, equivalently, of $\Pi$) has been, so far, arbitrary. We say that this numbering is
good if for any $i\in [1, n]$, $\alpha_i$ is special or semispecial when considered as a simple root of the root system $\Phi\cap(\mathbb{Z}\alpha_1+ \cdots + \mathbb{Z}\alpha_i)$.
\end{definition}

We can always choose a good numbering for the rows and columns of the Cartan matrix. The following lemma tell us
a good numbering of $U^+$.
\begin{lemma}\label{tangz01}
( see \rm{\cite{Hu2009}})
Let $\Phi^+$ be a positive root system of the complex simple Lie algebra of type $G_2$. Then
$\Phi^{+}=\{\alpha_1, \alpha_2, \alpha_1+\alpha_2, \alpha_2+2\alpha_1, \alpha_2+3\alpha_1, 2\alpha_2+3\alpha_1\}$
is a convex ordering of positive roots, $\Pi=\{\alpha_2, \alpha_1\}$ is a good numbering.
\end{lemma}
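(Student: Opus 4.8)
The plan is to reduce the entire statement to a finite combinatorial verification inside the rank-two root lattice of type $G_2$, once the Cartan data have been pinned down. First I would recover the Cartan integers from the two quantum Serre relations defining $U^+$: the relation that is quadratic in $e_2$ encodes an $\alpha_2$-string of length two through $\alpha_1$, giving $\langle\alpha_1,\alpha_2^\vee\rangle=-1$, while the relation that is quartic in $e_1$ encodes an $\alpha_1$-string of length four through $\alpha_2$, giving $\langle\alpha_2,\alpha_1^\vee\rangle=-3$. Hence $(\alpha_2,\alpha_2)=3(\alpha_1,\alpha_1)$, so $\alpha_1$ is the short simple root and $\alpha_2$ the long one. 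Starting from the two simple roots and closing up under the string condition (equivalently, applying $s_1$ and $s_2$ repeatedly), I would generate the positive roots and check that the process halts precisely at $\alpha_1,\ \alpha_2,\ \alpha_1+\alpha_2,\ 2\alpha_1+\alpha_2,\ 3\alpha_1+\alpha_2,\ 3\alpha_1+2\alpha_2$, which establishes that the displayed set is $\Phi^+$.

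For the convex ordering I would verify the defining property directly on this six-element list. The only additive relations among these roots are $\alpha_1+\alpha_2$, $\,\alpha_1+(\alpha_1+\alpha_2)=2\alpha_1+\alpha_2$, $\,\alpha_1+(2\alpha_1+\alpha_2)=3\alpha_1+\alpha_2$, and $\,(\alpha_1+\alpha_2)+(2\alpha_1+\alpha_2)=\alpha_2+(3\alpha_1+\alpha_2)=3\alpha_1+2\alpha_2$, so checking that each sum occupies the position required of it relative to its summands is a short mechanical computation; equivalently I would confirm that every segment of the ordering is closed under root addition, which is the form of convexity needed to run the PBW-type argument later.

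For the good numbering I would apply Lusztig's criterion one nested subsystem at a time. For $i=1$ the relevant subsystem has rank one, so its unique simple root is automatically special. For $i=2$ the subsystem is all of $\Phi$, and here the verification is just a reading of coefficients: the long root occurs with coefficient at most $1$ in every positive root except the highest root $3\alpha_1+2\alpha_2$, where its coefficient is exactly $2$, so it is semispecial, whereas the short root occurs with coefficient as large as $3$ and is therefore neither special nor semispecial. I expect this coefficient bookkeeping to be the crux, and it is also the step most sensitive to convention: the short/long identification fixed in the first step is exactly what forces which arrangement of $\{\alpha_1,\alpha_2\}$ qualifies as good, so the main obstacle is simply to keep the labelling of the two simple roots consistent throughout and to match it against the numbering asserted in the statement.
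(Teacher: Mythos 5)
Your first two steps are sound: the quartic Serre relation in $e_1$ and the quadratic one in $e_2$ do give $\langle\alpha_2,\alpha_1^\vee\rangle=-3$ and $\langle\alpha_1,\alpha_2^\vee\rangle=-1$, so $\alpha_1$ is short, $\alpha_2$ is long, and the displayed six roots are exactly $\Phi^+$; your list of the five additive relations among them is also complete. (For the record, the paper itself gives no proof of this lemma at all — it is quoted from Hu--Wang — so any self-contained verification is necessarily a different route.) The genuine problem is your convexity step. In a convex ordering a sum of two positive roots must lie \emph{strictly between} its summands, but the list as displayed in the lemma is ordered by height, so every sum comes \emph{after} both of its summands: already $\alpha_1+\alpha_2$ sits after both $\alpha_1$ and $\alpha_2$, and in fact all five additive relations you enumerate violate the betweenness condition; likewise your "segment" criterion fails at once, since the initial segment $\{\alpha_1,\alpha_2\}$ is not closed under addition. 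Carried out honestly, your "short mechanical computation" refutes rather than proves convexity of the list as written. The missing idea is that the displayed set is only the \emph{set} of positive roots; the convex ordering the lemma is really asserting (and the one the paper actually uses) is the Lyndon-word ordering that indexes the root vectors $X_1,\dots,X_6$, namely $\alpha_1 < 3\alpha_1+\alpha_2 < 2\alpha_1+\alpha_2 < 3\alpha_1+2\alpha_2 < \alpha_1+\alpha_2 < \alpha_2$. That order passes all five betweenness checks, and it is precisely what makes the relations of Lemma 2.4 take the form $X_jX_i-\lambda_{j,i}X_iX_j=$ (polynomial in $X_{i+1},\dots,X_{j-1}$), which the Ore-extension and Cauchon machinery downstream require. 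A correct proof must identify and verify \emph{this} ordering, not the displayed one.

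Your good-numbering step has a related unresolved point, which you flag ("most sensitive to convention") but never settle — and it cannot be left unsettled, because it is the whole content of that half of the statement. Your coefficient computation is correct: the long root is semispecial (coefficient $\le 1$ everywhere except exactly $2$ in the highest root $3\alpha_1+2\alpha_2$) and the short root is neither special nor semispecial (its coefficients reach $3$). But under Definition 2.2 as stated in the paper, with subsystems $\Phi\cap(\mathbb{Z}\alpha_1+\cdots+\mathbb{Z}\alpha_i)$ built from \emph{initial} segments, this forces the root numbered \emph{last} to be the long one; i.e. it yields $(\alpha_1,\alpha_2)$, short first and long second, as the good numbering. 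Read left-to-right, the lemma's listing $\Pi=\{\alpha_2,\alpha_1\}$ is the opposite order, and it qualifies as good only if one reads the listing in the reverse sense or uses the reversed-segment convention (subsystems generated by $\alpha_i,\dots,\alpha_n$). So "matching against the numbering asserted in the statement" is not bookkeeping: it is exactly the step at which your argument must either fix the convention or exhibit the discrepancy, and as proposed it does neither, so the assertion about $\{\alpha_2,\alpha_1\}$ is not established.
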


\quad With the notation as above, we take $X_1, \cdots, X_6$ as the corresponding quantum root vectors in $U^+$, where
\begin{eqnarray*}
&& X_1 = e_1,\ \ \ X_2=e_1X_3-r^2sX_3e_1,\ \ \ X_3=e_1X_5-rs^2X_5e_1,\\¡¡
&& X_4=X_3X_5-r^2sX_5X_3,\ \ \ X_5=e_1e_2-s^3e_2e_1,\ \ \ X_6= e_2.
\end{eqnarray*}
Observe that $\{X_6^{n_6}X_5^{n_5}X_4^{n_4}X_3^{n_3}X_2^{n_2}X_1^{n_1}|n_i \in \mathbb{N}, i=1, \cdots, 6\}$ form a convex PBW-type Lyndon basis of the algebra $U^+$ (see \cite{Hu2010}, Theorem 2.4).

We can get the following formula by simple calculation.
\begin{lemma}\label{12}
The following identities hold:

 $X_6X_5=r^{-3}X_5X_6$,

 $X_6X_4=r^{-6}s^{-3}X_4X_6-r^{-3}s^{-3}(r-s)(r^2-s^2)X_5^3$,

 $X_6X_3=r^{-3}s^{-3}X_3X_6-r^{-2}s^{-3}(r^2-s^2)X_5^2$,

 $X_6X_2=r^{-3}s^{-6}X_2X_6-r^{-1}s^{-5}(r^3-s^3)X_5X_3-r^{-2}s^{-6}\eta X_4$,

 $X_6X_1=s^{-3}X_1X_6-s^{-3}X_5$,

 $X_5X_4=r^{-3}X_4X_5$,

 $X_5X_3=r^{-2}s^{-1}X_3X_5-r^{-2}s^{-1}X_4$,

 $X_5X_2=r^{-3}s^{-3}X_2X_5-r^{-2}s^{-3}\zeta X_3^{2}$,

 $X_5X_1=r^{-1}s^{-2}X_1X_5-r^{-1}s^{-2}X_3$,

 $X_4X_3=r^{-3}X_3X_4$,

 $X_4X_2=r^{-6}s^{-3}X_2X_4-r^{-3}s^{-3}(r-s)\zeta X_3^3$,

 $X_4X_1=r^{-3}s^{-3}X_1X_4-r^{-2}s^{-3}\zeta X_3^2$,

 $X_3X_2=r^{-3}X_2X_3$,

 $X_3X_1=r^{-2}s^{-1}X_1X_3-r^{-2}s^{-1}X_2$,

 $X_2X_1=r^{-3}X_1X_2$.
\end{lemma}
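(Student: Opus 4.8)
The plan is to treat the fifteen identities not as independent computations but as the structure constants of the convex PBW basis, and to derive them by a short bootstrapping argument that exploits the root-lattice grading. First I would record that $U^+$ carries an $\mathbb{N}\alpha_1\oplus\mathbb{N}\alpha_2$-grading with $\deg e_1=\alpha_1$, $\deg e_2=\alpha_2$, so that the root vectors are homogeneous of degrees $\deg X_1=\alpha_1$, $\deg X_6=\alpha_2$, $\deg X_5=\alpha_1+\alpha_2$, $\deg X_3=2\alpha_1+\alpha_2$, $\deg X_2=3\alpha_1+\alpha_2$, $\deg X_4=3\alpha_1+2\alpha_2$. For each pair $i>j$ this grading restricts the monomials that may occur on the right-hand side to the ordered PBW monomials of the correct total degree lying strictly between $X_j$ and $X_i$; a direct count shows these are exactly the monomials written in the statement (for example, only $X_2X_1,X_3X_1^2,X_5X_1^3,X_6X_1^4$ have degree $4\alpha_1+\alpha_2$), so only the scalar coefficients remain to be pinned down.

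Next I would read off the five easiest relations with essentially no work. Solving the defining equations for $X_5,X_3,X_2,X_4$ for their reversed products gives at once
\[
X_6X_1=s^{-3}X_1X_6-s^{-3}X_5,\qquad X_5X_1=r^{-1}s^{-2}X_1X_5-r^{-1}s^{-2}X_3,
\]
\[
X_3X_1=r^{-2}s^{-1}X_1X_3-r^{-2}s^{-1}X_2,\qquad X_5X_3=r^{-2}s^{-1}X_3X_5-r^{-2}s^{-1}X_4,
\]
and a one-line check against the first quantum Serre relation yields $X_6X_5=r^{-3}X_5X_6$ (equivalently $e_2^2e_1+(rs)^{-3}e_1e_2^2=(r^{-3}+s^{-3})e_2e_1e_2$). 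The remaining ten identities I would establish in a dependency order dictated by the grading, repeatedly substituting the definitions of the $X_k$ and normalizing every intermediate expression back into the PBW order $X_6^{n_6}\cdots X_1^{n_1}$ using the relations already proved. Thus $X_4X_3$ and $X_3X_2$ involve only previously-normalized products and come out quickly, after which $X_5X_2$, $X_6X_3$, $X_6X_4$ follow, and finally the ``long'' relations $X_4X_1$, $X_4X_2$, $X_6X_2$, $X_2X_1$; the constants $\xi,\eta,\zeta$ and the powers of $r,s$ emerge by collecting the cross terms, and checking that they collapse to exactly the stated coefficients is the routine but lengthy core of the computation.

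I expect the genuine obstacle to be the relations of top degree in $\alpha_1$, above all $X_2X_1=r^{-3}X_1X_2$ together with $X_4X_1$ and $X_4X_2$. Here the naive expansion produces candidate correction terms (e.g.\ multiples of $X_3X_1^2$ or $X_3^3$) whose vanishing is not forced by the grading alone but only by the second, degree-four quantum Serre relation; making that cancellation explicit, rather than the bookkeeping for the lower-degree relations, is where the real content sits. A cleaner alternative I would keep in reserve is to invoke the Levendorskii--Soibelman straightening theory for convex PBW bases, which guarantees a priori that each $X_iX_j-c_{ij}X_jX_i$ lies in the span of the intermediate ordered monomials with leading scalar $c_{ij}$ equal to the value of the skew Hopf pairing on the relevant degrees; this would reduce the entire lemma to determining finitely many structure constants by matching lowest-order terms, bypassing most of the direct manipulation.
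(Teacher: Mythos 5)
Your opening batch is fine: the four identities for $X_6X_1$, $X_5X_1$, $X_3X_1$, $X_5X_3$ are indeed just the definitions of $X_5,X_3,X_2,X_4$ rearranged, $X_6X_5=r^{-3}X_5X_6$ is a correct one-line consequence of the cubic Serre relation, and your identification of $X_2X_1-r^{-3}X_1X_2$ with the quartic Serre element is also correct. The genuine gap is the bootstrapping step in the middle: you claim that $X_4X_3$ and $X_3X_2$ (and then $X_5X_2$, $X_6X_3$, $X_6X_4$) ``involve only previously-normalized products and come out quickly'' from the relations already proved. This cannot work, because at that stage every relation you have, except $X_6X_5=r^{-3}X_5X_6$, \emph{is} the definition of some root vector rewritten, and rewriting with a definition is reversible, hence circular. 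Concretely, for $X_3X_2$ your procedure gives
\begin{align*}
X_3X_2 &= (X_3X_1)X_3-r^2s\,X_3(X_3X_1)
       = r^{-2}s^{-1}(X_1X_3-X_2)X_3-X_3(X_1X_3-X_2)\\
       &= r^{-2}s^{-1}X_1X_3^2-r^{-2}s^{-1}X_2X_3-X_3X_1X_3+X_3X_2
       = X_3X_2,
\end{align*}
since $X_3X_1X_3=r^{-2}s^{-1}X_1X_3^2-r^{-2}s^{-1}X_2X_3$ by the same definitional relation; the computation returns its input, and the identical collapse happens for $X_4X_3$. The point is that $X_3X_2=r^{-3}X_2X_3$ and $X_4X_3=r^{-3}X_3X_4$ are statements inside the subalgebras generated by $\{X_1,X_5\}$ and $\{X_3,X_5\}$ respectively, in which you have so far established no relations at all, so they require fresh input from the Serre relations just as much as $X_2X_1$ does. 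Your dependency order is also inverted at one point: using only the definitional relations one finds $X_5X_2-r^{-3}s^{-3}X_2X_5=X_4X_1-r^{-3}s^{-3}X_1X_4$, so $X_5X_2$ reduces to $X_4X_1$, which you schedule \emph{after} it. Finally, the assertion in your first paragraph that the grading already confines the correction terms to monomials in the variables strictly between $X_i$ and $X_j$ is false, as your own list $X_3X_1^2$, $X_5X_1^3$, $X_6X_1^4$ shows; that confinement is exactly the convexity property of the Lyndon basis, which is a theorem, not a consequence of the grading.

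To close the gap you must do one of two things. Either descend to words in $e_1,e_2$ for each of the ten remaining identities and reduce them using both Serre relations --- this is what the paper's terse ``by simple calculation'' amounts to; for instance $X_3X_2=r^{-3}X_2X_3$ becomes an identity among words containing five $e_1$'s and two $e_2$'s, routine but long --- or promote your ``reserve'' plan to the main argument: the convexity theorem for the PBW-type Lyndon basis (Theorem 2.4 of \cite{Hu2010}) guarantees a priori that $X_jX_i-\lambda_{j,i}X_iX_j$ lies in the span of ordered monomials in $X_{i+1},\dots,X_{j-1}$, after which your grading count together with the matching of finitely many coefficients genuinely does determine each right-hand side. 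Either route is sound; the hybrid you actually propose, in which the middle ten relations are supposed to follow from the first five by normalization, fails at its second step.
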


\subsection{$U^+$ as an Iterated Ore Extension}

The aim of this section is to find the algebraic structures of the algebras $U^+$ as an Ore extension. Firstly, let us recall the definition of Ore extension. Let $R$ be an algebra and $R[t]$ be the free (left) $R$-module consisting of all polynomials of form $$P=a_nt^n+a_{n-1}t^{n-1}+...+a_0t^{0}$$
with coefficients in $R$. If $a_n \neq 0$, we say that the degree ${\rm \deg}(P)$ of $P$ is equal to $n$; by convention, we set ${\rm \deg}(0)=-\infty$. Let $\sigma$ be an algebra endomorphism of $R$. An $\sigma$-derivation of $R$ is a linear endomorphism $\delta$ of $R$ such that
\begin{equation*}
\delta(ab)=\sigma(a)\delta(b)+\delta(a)b
\end{equation*}
for all $a, b \in R$.
Let $R$ be an algebra without zero-divisors. Given an injective algebra endomorphism $\sigma$ of $R$ and an $\sigma$-derivation $\delta$ of $R$, there exists a unique algebra structure on $R[t]$ such that the inclusion of $R$ into $R[t]$ is an algebra, morphism and relation $ta=\sigma(a)t+\delta(a)$ holds for all $a$ in $R$, the algebra defined by above, denote $R[t; \sigma,\delta]$, is called the {\it Ore extension}, also called {\it a  skew  polynomial algebra} \cite{Christian}.

Using Lemmas \ref{tangz01} and \ref{12}, $U^+$  can be regarded as an iterated Ore extension as follows.

\begin{lemma}\label{tangz02}
$U^+$ is a skew polynomial ring which could be expressed as:
\begin{equation*}
U^+=\mathbb{C}[X_1][X_2; \sigma_2, \delta_2][X_3; \sigma_3, \delta_3]...[X_6; \sigma_6, \delta_6]
\end{equation*}
where the $\sigma_j$'s are $\mathbb{C}$-linear automorphisms and the $\delta_j$'s are $\mathbb{C}$-linear $\sigma_j$-derivations such that for $1\leq i< j \leq 6$,$\sigma_j(X_i)=\lambda_{j,i}X_i$ and $\delta_j(X_i)=P_{j,i}$, with $\lambda_{j,i}$ and $P_{j,i}$ defined as follows:
\begin{equation*}\label{lambda}
X_jX_i-\lambda_{j,i}X_iX_j=P_{j,i}.
\end{equation*}
\end{lemma}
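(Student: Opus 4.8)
The plan is to realise $U^+$ as the asserted iterated Ore extension by building it up one generator at a time \emph{from inside} $U^+$ itself, using the convex PBW-type Lyndon basis recorded after Lemma \ref{tangz01} together with the commutation relations of Lemma \ref{12}. For $1\le j\le 6$ write $R_j$ for the subalgebra of $U^+$ generated by $X_1,\dots,X_j$, so that $R_1=\mathbb{C}[X_1]$ and $R_6=U^+$. The whole argument rests on one structural feature of Lemma \ref{12}: every relation there has the triangular form $X_jX_i=\lambda_{j,i}X_iX_j+P_{j,i}$ with $1\le i<j$, where each $\lambda_{j,i}$ is a nonzero monomial in $r,s$ and each $P_{j,i}$ is a polynomial in $X_{i+1},\dots,X_{j-1}$, hence lies in $R_{j-1}$. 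I would first record this observation explicitly, simply reading off the scalars $\lambda_{j,i}$ and the polynomials $P_{j,i}$ from the list in Lemma \ref{12} and checking in each case that $P_{j,i}\in R_{j-1}$.

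The core step is to produce, for each $j$, the decomposition $R_j=\bigoplus_{m\ge 0}R_{j-1}X_j^{\,m}$ as a free left $R_{j-1}$-module with basis $\{X_j^{\,m}\}$. I would obtain this from the PBW basis $\{X_6^{n_6}\cdots X_1^{n_1}\}$ of $U^+$: restricting to words in $X_1,\dots,X_j$ shows $R_j$ has $\mathbb{C}$-basis $\{X_j^{n_j}\cdots X_1^{n_1}\}$, while the triangular relations let one push every occurrence of $X_j$ to the right, so the right-ordered monomials $\{(X_{j-1}^{n_{j-1}}\cdots X_1^{n_1})\,X_j^{\,m}\}$ also span $R_j$. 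Comparing these two spanning sets degree-by-degree with respect to the root-lattice grading of $U^+$ (in each homogeneous component of degree $\gamma$ both sets are indexed by the same tuples $(n_1,\dots,n_{j-1},m)$ subject to $\sum_{i<j}n_i\deg X_i+m\deg X_j=\gamma$, hence have equal finite cardinality) shows the right-ordered monomials are a basis as well. This is precisely the freeness an Ore extension requires, and I expect the bookkeeping here to be the main obstacle: one must verify that pushing $X_j$ rightward never raises its degree above one on generators (guaranteed by $P_{j,i}\in R_{j-1}$) and that the two monomial families match in every graded degree.

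Granting the decomposition, for $a\in R_{j-1}$ the element $X_ja\in R_j$ has $X_j$-degree at most one, so it is written uniquely as $X_ja=\sigma_j(a)X_j+\delta_j(a)$ with $\sigma_j(a),\delta_j(a)\in R_{j-1}$; this defines $\sigma_j$ and $\delta_j$ unambiguously. The algebra axioms then force their good behaviour for free: expanding $X_j(ab)$ in two ways and invoking uniqueness of the decomposition yields at once $\sigma_j(ab)=\sigma_j(a)\sigma_j(b)$ and $\delta_j(ab)=\sigma_j(a)\delta_j(b)+\delta_j(a)b$, so $\sigma_j$ is an algebra endomorphism and $\delta_j$ is a $\sigma_j$-derivation without any further relation-by-relation check. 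Evaluating on the generators via Lemma \ref{12} gives $\sigma_j(X_i)=\lambda_{j,i}X_i$ and $\delta_j(X_i)=P_{j,i}$, exactly as asserted in the statement. Since $\sigma_j$ scales each generator by a unit $\lambda_{j,i}\in\mathbb{C}^*$, the assignment $X_i\mapsto\lambda_{j,i}^{-1}X_i$ furnishes a two-sided inverse, so $\sigma_j$ is a $\mathbb{C}$-linear automorphism.

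Finally, the relation $X_ja=\sigma_j(a)X_j+\delta_j(a)$ holds for all $a\in R_{j-1}$ by construction, which is precisely the defining relation of the Ore extension $R_{j-1}[X_j;\sigma_j,\delta_j]$; combined with the decomposition $R_j=\bigoplus_m R_{j-1}X_j^{\,m}$, this identifies $R_j=R_{j-1}[X_j;\sigma_j,\delta_j]$. Iterating from $j=2$ to $j=6$ then yields $U^+=R_6=\mathbb{C}[X_1][X_2;\sigma_2,\delta_2]\cdots[X_6;\sigma_6,\delta_6]$, as claimed. An alternative route would be to construct the iterated Ore extension abstractly and map it onto $U^+$; there the obstacle shifts to verifying directly that each $\delta_j$ is compatible with every defining relation of $R_{j-1}$ (the heaviest cases occurring at $j=6$, such as the relation governing $X_6X_2$), a computation the present inside-$U^+$ approach neatly bypasses.
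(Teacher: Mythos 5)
Your proof is correct and takes essentially the same route the paper intends: the paper states this lemma without a detailed argument, as a direct consequence of the convex PBW-type Lyndon basis and the triangular commutation relations of Lemma \ref{12} (each $P_{j,i}$ a polynomial in $X_{i+1},\dots,X_{j-1}$), which are exactly the two ingredients your inside-$U^+$ construction develops into a full proof. The only small repair needed is your justification that $\sigma_j$ is an automorphism: rather than asserting that $X_i\mapsto\lambda_{j,i}^{-1}X_i$ extends to an algebra map (which would itself require checking relations), note that the endomorphism $\sigma_j$ rescales each PBW monomial of $R_{j-1}$ by a nonzero scalar, hence is diagonal on a basis and therefore bijective.
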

Observe that we can obtain $\lambda_{j,i}$ and $P_{j, i}$ for $1\leq i< j \leq 6$ from Lemma \ref{12} and thus determine these automorphism $\sigma_j$ and $\sigma_j$-derivation $\delta_j$. In fact, $P_{j,i}$ are of the form
\begin{equation*}\label{P}
P_{j,i}=\Sigma_{\overline{k}=(k_{i+1},...,k_{j-1})}c_{\overline{k}}X_{i+1}^{k_{i+1}}...X_{j-1}^{k_{j-1}}
\end{equation*}
where $c_{\overline{k}} \in \mathbb{C}$.

\begin{corollary}\label{3.1}
For $j\in [\![2, 6]\!]$, the $\mathbb{C}$-linear automorphisms $\sigma_j$ and $\sigma_j$-derivation $\delta_j$ which appear in Lemma \ref{tangz02}, have the following commutative relations:
\begin{eqnarray*}
\sigma_6 \circ \delta_6 =r^{-3}s^3\delta_6 \circ \sigma_6,  \ \ \ \ \
\sigma_5 \circ \delta_5 =r^{-1}s\delta_5 \circ \sigma_5, \\
\sigma_4 \circ \delta_4 =r^{-3}s^3\delta_4 \circ \sigma_4, \ \ \ \ \
\sigma_3 \circ \delta_3 =r^{-1}s\delta_3 \circ \sigma_3.
\end{eqnarray*}
Furthermore, for any $\ell \in [\![2, 6]\!]$, there exists some $q_\ell \in \mathbb{C}^*$ such that $\sigma_\ell \circ \delta_\ell=q_\ell\delta_\ell \circ \sigma_\ell$, in which $q_i=r^{-3}s^{3}$ for $i \in \{4, 6\}$ and $q_j=r^{-1}s$ for $j \in \{3, 5\}$.
\end{corollary}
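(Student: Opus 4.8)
The plan is to reduce the operator identity $\sigma_\ell\circ\delta_\ell=q_\ell\,\delta_\ell\circ\sigma_\ell$, which is an equality of $\mathbb{C}$-linear maps on the subalgebra $R_{\ell-1}:=\mathbb{C}[X_1][X_2;\sigma_2,\delta_2]\cdots[X_{\ell-1};\sigma_{\ell-1},\delta_{\ell-1}]$, to a finite check on the generators $X_1,\dots,X_{\ell-1}$. The key observation I would use is that both composites obey one and the same twisted Leibniz rule. Since $\sigma_\ell$ is an algebra automorphism and $\delta_\ell$ is a $\sigma_\ell$-derivation, a direct application of the definitions gives, for all $a,b\in R_{\ell-1}$,
\[
(\sigma_\ell\circ\delta_\ell)(ab)=\sigma_\ell^2(a)\,(\sigma_\ell\circ\delta_\ell)(b)+(\sigma_\ell\circ\delta_\ell)(a)\,\sigma_\ell(b),
\]
\[
(\delta_\ell\circ\sigma_\ell)(ab)=\sigma_\ell^2(a)\,(\delta_\ell\circ\sigma_\ell)(b)+(\delta_\ell\circ\sigma_\ell)(a)\,\sigma_\ell(b).
\]
Hence the difference $C_\ell:=\sigma_\ell\circ\delta_\ell-q_\ell\,\delta_\ell\circ\sigma_\ell$ satisfies $C_\ell(ab)=\sigma_\ell^2(a)C_\ell(b)+C_\ell(a)\sigma_\ell(b)$, that is, $C_\ell$ is a $(\sigma_\ell^2,\sigma_\ell)$-twisted derivation. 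Such a twisted derivation that vanishes on an algebra generating set vanishes identically, so it suffices to prove $C_\ell(X_i)=0$ for every $i<\ell$.

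On a generator the identity becomes explicit. Using $\sigma_\ell(X_i)=\lambda_{\ell,i}X_i$ and $\delta_\ell(X_i)=P_{\ell,i}$ from Lemma \ref{tangz02}, I obtain $(\sigma_\ell\circ\delta_\ell)(X_i)=\sigma_\ell(P_{\ell,i})$ and $(\delta_\ell\circ\sigma_\ell)(X_i)=\lambda_{\ell,i}P_{\ell,i}$, so $C_\ell(X_i)=0$ is equivalent to the eigenvector condition
\[
\sigma_\ell(P_{\ell,i})=q_\ell\,\lambda_{\ell,i}\,P_{\ell,i}.
\]
Thus the task is to show that each $P_{\ell,i}$ is a $\sigma_\ell$-eigenvector with eigenvalue exactly $q_\ell\lambda_{\ell,i}$. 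Because $P_{\ell,i}$ is a polynomial in $X_{i+1},\dots,X_{\ell-1}$ and $\sigma_\ell$ rescales each such generator by the scalar $\lambda_{\ell,\cdot}$ recorded in Lemma \ref{12}, the map $\sigma_\ell$ acts diagonally on monomials, multiplying $X_{i+1}^{k_{i+1}}\cdots X_{\ell-1}^{k_{\ell-1}}$ by $\prod_m\lambda_{\ell,m}^{k_m}$. The verification therefore splits into two routine demands: every monomial occurring in $P_{\ell,i}$ must carry the same $\sigma_\ell$-weight, and that common weight must equal $q_\ell\lambda_{\ell,i}$.

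Carrying this out from Lemma \ref{12} is then a bookkeeping exercise, and the values $q_4=q_6=r^{-3}s^3$ and $q_3=q_5=r^{-1}s$ are pinned down by the single-monomial cases (for instance $P_{5,3}=-r^{-2}s^{-1}X_4$ forces $q_5\lambda_{5,3}=\lambda_{5,4}=r^{-3}$, whence $q_5=r^{-1}s$). The one step with genuine content is the polynomial $P_{6,2}=-r^{-1}s^{-5}(r^3-s^3)X_5X_3-r^{-2}s^{-6}\eta X_4$, which contains the two distinct monomials $X_5X_3$ and $X_4$: here the claim requires the nontrivial coincidence $\sigma_6(X_5X_3)=\lambda_{6,5}\lambda_{6,3}X_5X_3=r^{-6}s^{-3}X_5X_3$ and $\sigma_6(X_4)=\lambda_{6,4}X_4=r^{-6}s^{-3}X_4$, so that both share the weight $r^{-6}s^{-3}=q_6\lambda_{6,2}$. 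I expect this homogeneity of $P_{6,2}$ to be the main obstacle, since it is not automatic and encodes the compatibility built into the $G_2$ Serre relations; once it is verified, every $P_{\ell,i}$ is seen to be homogeneous of the required weight and the corollary follows. Finally, for $\ell=2$ one has $P_{2,1}=0$, so $\delta_2=0$ and the relation holds vacuously for any scalar, which explains why no value of $q_2$ need be recorded.
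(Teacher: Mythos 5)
Your proof is correct and follows essentially the same route as the paper's: both reduce the operator identity $\sigma_\ell\circ\delta_\ell=q_\ell\,\delta_\ell\circ\sigma_\ell$ to a check on the generators $X_1,\dots,X_{\ell-1}$, using the explicit values of $\lambda_{\ell,i}$ and $P_{\ell,i}$ read off from Lemma \ref{12} (the paper works out $\ell=6$ on $X_5$ and $X_4$ explicitly and declares the remaining cases similar). The one place you go beyond the paper is in observing that $\sigma_\ell\circ\delta_\ell-q_\ell\,\delta_\ell\circ\sigma_\ell$ is a $(\sigma_\ell^2,\sigma_\ell)$-twisted derivation, which is exactly what legitimizes the generator-by-generator check; the paper leaves this reduction implicit, so your version is, if anything, the more complete argument.
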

\begin{proof}
According to Lemma \ref{12}, we can list the values of $\lambda_{j,i},P_{j,i}$ ($1\le i< j\le 6$) in Lemma \ref{tangz02} as
\begin{eqnarray*}
&& \lambda_{2,1}=r^{-3},\ \lambda_{3,1}=r^{-2}s^{-1},\ \lambda_{3,2}=  r^{-3},\ \lambda_{4,1}=r^{-3}s^{-3}, \\
&& \lambda_{4,2}= r^{-6}s^{-3},\  \lambda_{4,3}=r^{-3},\ \lambda_{5,1}=r^{-1}s^{-2},\ \lambda_{5,2}=r^{-3}s^{-3},\\
&& \lambda_{5,3}=r^{-2}s^{-1},\ \lambda_{5,4}= r^{-3},\ \lambda_{6,1}=s^{-3},\ \lambda_{6,2}= r^{-3}s^{-6}, \\
&& \lambda_{6,3}=r^{-3}s^{-3},\ \lambda_{6,4}=r^{-6}s^{-3},\ \lambda_{6,5}=r^{-3}
\end{eqnarray*}
and
\begin{eqnarray*}
&& P_{2,1}=0,\ P_{3,1}=-r^{-2}s^{-1}X_2, \  P_{3,2}=0, P_{4,1}=-r^{-2}s^{-3}\zeta X_3^2, \\
&&  P_{4,2}=-r^{-3}s^{-3}\zeta(r-s)X_3^3, \ P_{4,3}=0, \ P_{5,1}=-r^{-1}s^{-2}X_3\zeta X_3^2, \\
&& P_{5,2}=-r^{-2}s^{-3}\zeta X_3^2, \ P_{5,3}=-r^{-2}s^{-1}X_4, \ P_{5,4}=0, \\
&&  P_{6,1}=-s^{-3}X_5,\ P_{6,2}=-r^{-1}s^{-5}(r^3-s^3)X_5X_3-r^{-2}s^{-6}\eta X_4,\\
&& P_{6,3}=-r^{-2}s^{-3}(r^2-s^2)X_5^2, \ P_{6,4}=-r^{-3}s^{-3}(r-s)(r^2-s^2)X_5^3, \ P_{6,5}=0.
\end{eqnarray*}
By the above equations and (\ref{lambda}), we know $\sigma_6 \circ \delta_6 (X_5)=\delta_6 \circ \sigma_6 (X_5)=0$; in additions,
\begin{equation*}
 \sigma_6 \circ \delta_6 (X_4)=\sigma_6(-r^{-3}s^{-3}(r-s)(r^2-s^2)X_5^3)=-r^{-3}s^{-3}(r-s)(r^2-s^2)r^{-9}X_5^3
\end{equation*}
and
\begin{equation*}
\delta_6 \circ \sigma_6 (X_4)=\delta_6(r^{-6}s^{-3}X_4)=r^{-6}s^{-3}(-r^{-3}s^{-3}(r-s)(r^2-s^2)X_5^3)
\end{equation*}
yield that $\sigma_6 \circ \delta_6 (X_4)=r^{-3}s^3\delta_6 \circ \sigma_6 (X_4)$. Similarly,
let $\sigma_6 \circ \delta_6$ and $\delta_6 \circ \sigma_6$ action on $X_3$, $X_2$ and $X_1$ respectively,  we have $\sigma_6 \circ \delta_6 (X_i)=r^{-3}s^3\delta_6 \circ \sigma_6 (X_i), i=1,2,3$. This shows that $\sigma_6 \circ \delta_6 =r^{-3}s^3\delta_6 \circ \sigma_6$.  The proof for other cases is similar. The proof is completed.
\end{proof}

\subsection{Embedding $U^+$ into a Quantum Torus}\label{section2.3}

\ \ \ For our proof, $U^+$ should be embedded in a quantum group of known derivations. We construct the so-called ``Derivative-Elimination Algorithm'', it consists of a sequence of changes of variables inside the division ring $F = Fract(R)$, starting with the indeterminates ($X_1$ ,...,$X_N$ ) and terminating with new variables ($T_1$ ,...,$T_N$ ) \cite{cauchon}. We fix some notation. For any $n \in \mathbb{N}^*$, $q \in \mathbb{C}$, we set $[n]_q:=1+q+\cdot \cdot \cdot+q^{n-1}$ and $[n]!_q:=[1]_q\times \cdot \cdot \cdot \times [n]_q$. In addition , we set $[0]!_q:=1$.

Recall that $U^+=\mathbb{C}[X_1][X_2;\sigma_2, \delta_2][X_3; \sigma_3, \delta_3]\cdot \cdot \cdot [X_6;\sigma_6, \delta_6]$ is an iterated Ore extension.
The point is that the algebra $U^+_{r, s} (G_2)$ has a Goldie quotient ring, which we shall denote by $Frac(U^+)$. Within the Goldie quotient ring $Frac(U^+)$ of $U^+$, for all $i\in [\![1, 6]\!]$, we set $X_i^{(7)}:=X_i$, $\sigma_i^{(7)}:=\sigma_i$, $\delta_i^{(7)}:=\delta_i$ and define $X_i^{(\ell)}\in Frac(U^+)$, for $\ell \in [\![2, 6]\!]$, by the following formulas ( see Subsections 3.2 in \cite{cauchon}):
\begin{equation}\label{tangz03}
X_i^{(\ell)}=\begin{cases}
X_i^{(\ell+1)}, & i\geq l;\\
\sum_{n=0}^{+\infty}\frac{(1-q_\ell)^{-n}}{[n]!_{q_\ell}}((\delta_\ell^{(\ell+1)})^n\circ (\sigma_\ell^{(\ell+1)})^{-n}(X_i^{(\ell+1)}))(X_\ell^{(\ell+1)})^{-n}, & i <l,
\end{cases}
\end{equation}
where $q_{\ell}$ are the same as in Corollary \ref{3.1} and for $j \in [\![1, 6]\!]$, the $\sigma_j^{(\ell+1)}$'s are $\mathbb{C}$-linear automorphisms and the $\delta_j^{(\ell+1)}$'s are $\mathbb{C}$-linear $\sigma_j^{(\ell+1)}$-derivation such that for $1 \leq i <j \leq 6$, $\sigma_j^{(\ell+1)}(X_i^{(\ell+1)})=\lambda_{j, i}X_i^{(\ell+1)}$ and $\delta_j^{(\ell+1)}(X_i^{(\ell+1)})=P_{j, i}^{(\ell+1)}$ with $P_{j, i}^{(\ell+1)}$ defined by
\begin{equation*}
P_{j, i}^{(\ell+1)}=\begin{cases}
0, & j \geq \ell+1;\\
\sum_{\overline{k}=(k_{i+1},..., k_{j-1})}c_{\overline{k}}(X_{i+1}^{(\ell+1)})^{k_{i+1}}\cdot \cdot \cdot(X_{j-1}^{(\ell+1)})^{k_{j-1}}, & j< \ell+1,
\end{cases}
\end{equation*}
where $\lambda_{j,i}$ and $c_{\overline{k}}$ are the same as in the formulas (\ref{lambda}) and (\ref{P}), respectively.

The relations between $X_i^{(\ell)}$ and $X_j^{(\ell+1)}, i,j\in [\![i, 6]\!], \ell \in [\![2, 7]\!]$ play a
very important role in determining the derivations of $U^+$. By making complicated calculation,
we may obtain the next result from (\ref{tangz03}).

\begin{lemma}\label{tzlemma1}
Set $Y_i:=X_i^{(6)}, Z_i:=X_i^{(5)}, U_i:=X_i^{(4)}, T_i:=X_i^{(3)}$ for all $i\in [\![1, 6]\!]$. Then we have
the following equations.

{\rm(1)} $Y_1= X_1-(1-r^{-3}s^{3})^{-1}X_5X_6^{-1},$\

 \ \ \ \ \! $Y_2= X_2-(1-r^{-3}s^3)^{-1}((r^2s)(r^3-s^3)X_5X_3+r\eta X_4)X_6^{-1}$

 \ \ \ \ \ \ \ \ \ $+\frac{(1-r^{-3}s^3)^{-2}}{1+r^{-3}s^3}s^4(r^2-s^2)((r^3-s^3)-r\eta s^{-1}(r-s))X_5^3X_6^{-2},$\

\ \ \ \ \! $Y_3 =X_3-(1-r^{-3}s^3)^{-1}r(r^2-s^2)X_5^2X_6^{-1}$,\

\ \ \ \ \! $Y_4 =X_4-(1-r^{-3}s^3)^{-1}r^3(r-s)(r^2-s^2)X_5^3X_6^{-1}$,

\ \ \ \ \! $ Y_5= X_5,\  Y_6 =X_6 $;

{\rm(2)} $Z_1= Y_1-(1-r^{-1}s)^{-1}Y_3Y_5^{-1}+\frac{(1-r^{-1}s)^{-2}}{(1+r^{-1}s)}r^{-1}sY_4Y_5^{-2},$\

 \ \ \ \ \! $Z_2= Y_2-(1-r^{-1}s)^{-1}r\zeta Y_3^2Y_5^{-1}+\frac{(1-r^{-1}s)^{-2}}{(1+r^{-1}s)}(sY_3Y_4+r^2s^2Y_4Y_3)Y_5^{-2},$\

\ \ \ \ \! $Z_3 =Y_3-(1-r^{-1}s)^{-1}Y_4Y_5^{-1}, \ Z_4=Y_4,\ Z_5=Y_5,\ Z_6=Y_6$;

{\rm(3)} $U_1= Z_1-(1-r^{-3}s^{3})^{-1}r \zeta Z_3^2Z_4^{-1},$\

 \ \ \ \ \! $U_2= Z_2-(1-r^{-3}s^{3})^{-1}r^3\zeta(r-s)Z_3^3Z_4^{-1}$,\

\ \ \ \ \! $U_3=Z_4, \ U_4=Z_4,\ U_5=Z_5,\ U_6=Z_6$;

{\rm(4)} $T_1= U_1-(1-r^{-1}s)^{-1}U_2U_3^{-1},$\

\ \ \ \ \! $T_2=U_2, \ T_3=U_3, \ T_4=U_4,\ T_5=U_5,\ T_6=U_6$;

{\rm(5)} $X_i^{(2)}= T_i$ for all $i\in [\![1, 6]\!]$.
\end{lemma}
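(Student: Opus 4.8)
The plan is to apply the defining recursion (\ref{tangz03}) directly, one level at a time, descending from $\ell=6$ to $\ell=2$, and to read off each $X_i^{(\ell)}$ as a finite sum. All the inputs are already assembled: the scalar $q_\ell$ from Corollary \ref{3.1}, the eigenvalue relations $\sigma_\ell^{(\ell+1)}(X_i^{(\ell+1)})=\lambda_{\ell,i}X_i^{(\ell+1)}$, and the twisted derivations $\delta_\ell^{(\ell+1)}(X_i^{(\ell+1)})=P_{\ell,i}^{(\ell+1)}$, where $P_{\ell,i}^{(\ell+1)}$ is the polynomial $P_{\ell,i}$ from the proof of Corollary \ref{3.1} with its generators promoted to their level-$(\ell+1)$ avatars. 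Since $\sigma_\ell^{(\ell+1)}$ merely rescales each generator, one has $(\sigma_\ell^{(\ell+1)})^{-n}(X_i^{(\ell+1)})=\lambda_{\ell,i}^{-n}X_i^{(\ell+1)}$, so the only genuine work in evaluating the $n$-th summand is computing $(\delta_\ell^{(\ell+1)})^{n}(X_i^{(\ell+1)})$ and collecting the resulting scalar against the factor $(1-q_\ell)^{-n}/[n]!_{q_\ell}$.

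Before computing I would check that the series in (\ref{tangz03}) terminates, so that each $X_i^{(\ell)}$ is a genuine polynomial expression in the pivot's reciprocal. The point is that $\delta_\ell^{(\ell+1)}$ is locally nilpotent on the relevant generators: by Lemma \ref{12} the polynomial $P_{\ell,i}$ involves only the variables $X_{i+1},\dots,X_{\ell-1}$, and all the adjacent brackets vanish, $P_{6,5}=P_{5,4}=P_{4,3}=P_{3,2}=P_{2,1}=0$. Hence each application of $\delta_\ell^{(\ell+1)}$ pushes the index set upward toward $\ell-1$, where the derivation annihilates the generator, so $(\delta_\ell^{(\ell+1)})^{n}(X_i^{(\ell+1)})=0$ once $n$ is large; in practice at most the terms $n=0,1,2$ survive. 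The commutation relation $\sigma_\ell^{(\ell+1)}\circ\delta_\ell^{(\ell+1)}=q_\ell\,\delta_\ell^{(\ell+1)}\circ\sigma_\ell^{(\ell+1)}$ of Corollary \ref{3.1} is what guarantees the whole procedure is consistent and explains the appearance of the $q_\ell$-deformed denominators $[n]!_{q_\ell}$.

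With termination in hand the body of the proof is a level-by-level evaluation. At $\ell=6$ (with $q_6=r^{-3}s^3$ and pivot $X_6$) the identity $\delta_6(X_5)=0$ collapses the higher terms for $X_1,X_3,X_4$, yielding the four formulas of part (1). Substituting these into (\ref{tangz03}) at $\ell=5$ (with $q_5=r^{-1}s$, pivot $Y_5=X_5$) produces the $Z_i$ of part (2); then $\ell=4$ (pivot $Z_4$) gives the $U_i$ of part (3), and $\ell=3$ (pivot $U_3$) gives the $T_i$ of part (4). Part (5) is immediate: at $\ell=2$ the only affected generator is $X_1$, and $\delta_2^{(3)}(X_1^{(3)})=P_{2,1}^{(3)}=0$, so the sum reduces to its $n=0$ term and $X_i^{(2)}=T_i$ for every $i$.

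The main obstacle is the bookkeeping in the genuinely second-order cases, namely $Y_2$, $Z_1$ and $Z_2$, where $\delta_\ell^{(\ell+1)}$ must be iterated twice. Here the relevant $P_{\ell,i}$ is a sum of monomials of different shape (for instance $P_{6,2}$ mixes $X_5X_3$ and $X_4$), so one must apply the twisted Leibniz rule $\delta_\ell(ab)=\sigma_\ell(a)\delta_\ell(b)+\delta_\ell(a)b$ together with the scaling relations of Lemma \ref{12}, and then correctly assemble the prefactor $(1-q_\ell)^{-2}(1+q_\ell)^{-1}$ arising from $[2]!_{q_\ell}=1+q_\ell$. Matching these coefficients, and the accompanying powers $\lambda_{\ell,i}^{-n}$, is the delicate step; once it is done the remaining entries are routine single-step computations and the stated formulas follow.
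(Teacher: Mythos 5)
Your proposal is correct and follows exactly the route the paper takes: the paper offers no proof beyond the remark that the lemma is obtained ``by making complicated calculation'' from the recursion (\ref{tangz03}), and your level-by-level evaluation using the data $q_\ell$, $\lambda_{\ell,i}$, $P_{\ell,i}$ from Corollary \ref{3.1} is precisely that calculation. Your added observations --- termination of the series via local nilpotency of $\delta_\ell^{(\ell+1)}$ (since the adjacent brackets $P_{\ell,\ell-1}$ vanish, only $n=0,1,2$ contribute) and the careful twisted-Leibniz treatment of the second-order cases $Y_2$, $Z_1$, $Z_2$ with the $[2]!_{q_\ell}=1+q_\ell$ denominators --- are exactly the details the paper suppresses, so your write-up is if anything more complete than the original.
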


Now we will give some useful corollary and  proposition:

\begin{corollary}\label{3.5}
With the notion as above, the following identities hold:

{\rm(1)}\, $X_1=Y_1+(1-r^{-3}s^{3})^{-1}Y_5Y_6^{-1}$,

\quad \ \ $X_2=Y_2+(1-r^{-3}s^{3})^{-1}(r^2s)(r^3-s^3)Y_5Y_3Y_6^{-1}$\

\ \ \ \ \ \ \qquad$+(r^3s)(r^3-s^3)(r^2-s^2)(1-r^{-3}s^{3})^{-2}Y_5^3Y_6^{-2}$\

\ \ \ \ \ \ \qquad$+(1-r^{-3}s^{3})^{-1}r\eta Y_4Y_6^{-1}+(1-r^{-3}s^{3})^{-2}r^4\eta (r-s)(r^2-s^2)Y_5^3Y_6^{-2}$\

\ \ \ \ \ \ \qquad $-\frac{(1-r^{-3}s^{3})^{-2}}{1+r^{-3}s^{3}}s^4(r^2-s^2)((r^3-s^3)-r\eta s^{-1}(r-s))Y_5^3Y_6^{-2}$,

\quad \ \ $X_3=Y_3+(1-r^{-3}s^{3})^{-1}r(r^2-s^2)Y_5^2Y_6^{-1}$,

\quad \ \ $X_4=Y_4+(1-r^{-3}s^{3})^{-1}r^3(r-s)(r^2-s^2)Y_5^3Y_6^{-1}$,

\quad \ \ $X_5=Y_5,X_6=Y_6.$

{\rm (2)} $Y_1=Z_1+(1-r^{-1}s)^{-1}Z_3Z_5^{-1}+(1-r^{-1}s)^{-2}Z_4Z_5^{-2}-\frac{(1-r^{-1}s)^{-2}}{(1+r^{-1}s)}r^{-1}sZ_4Z_5^{-2},$

\quad \ \ \ \,$Y_2=Z_2+(1-r^{-1}s)^{-1}r\zeta Z_3^2Z_5^{-1}+2(1-r^{-1}s)^{-2}r\zeta Z_3 Z_4Z_5^{-2}$

\ \ \ \ \ \ \ \qquad$+(1-r^{-1}s)^{-3}r\zeta Z_4Z_5^{-1}Z_4 Z_5^{-2}-\frac{(1-r^{-1}s)^{-2}}{(1+r^{-1}s)}sZ_3Z_4Z_5^{-2}$

\ \ \ \ \ \ \ \qquad$-\frac{(1-r^{-1}s)^{-3}}{(1+r^{-1}s)}sZ_4Z_5^{-1}Z_4Z_5^{-2}-\frac{(1-r^{-1}s)^{-2}}{(1+r^{-1}s)}r^2s^2Z_4Z_3Z_5^{-2}$

\ \ \ \ \ \ \ \qquad$-\frac{(1-r^{-1}s)^{-3}}{(1+r^{-1}s)}(1-r^{-1}s)^{-1}Z_4^{2}Z_5^{-3};$

\ \ \ \ \ \ $Y_3=Z_3+(1-r^{-1}s)^{-1}Z_4Z_5^{-1}, $

\ \ \ \ \ \ \,$Y_4=Z_4,$  $Y_5=Z_5, $ $Y_6=Z_6.$

{\rm (3)} $Z_1=U_1+(1-r^{-3}s^{3})^{-1}r \zeta U_3^2U_4^{-1}, $

\quad \ \ \ \ \!\,\!$Z_2=U_2+(1-r^{-3}s^{3})^{-1}r^3\zeta (r-s) U_3^3U_4^{-1}, $

\quad \ \ \ \ \!$Z_3=U_3, Z_4=U_4,$  $Z_5=U_5, Z_6=U_6, $

{\rm (4)}\, $U_1=T_1+(1-r^{-1}s)^{-1}T_2T_3^{-1}, $

\quad \ \ \ \ $U_2=T_2, U_3=T_3,$ $U_4=T_4, U_5=T_5, $ $U_6=T_6.$
\end{corollary}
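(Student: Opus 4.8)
The plan is to read Corollary \ref{3.5} as the inverse of the four change-of-variable systems recorded in Lemma \ref{tzlemma1}, and to produce that inverse by back-substitution. The key structural observation is that each of the four systems is \emph{unitriangular} with respect to the index ordering $1<2<\cdots<6$: at every Cauchon level the variables of largest index are left fixed (e.g. $Y_5=X_5,\ Y_6=X_6$ in part~(1); $Z_4=Y_4,\ Z_5=Y_5,\ Z_6=Y_6$ in part~(2); and similarly in parts~(3),(4)), while each variable of smaller index equals its higher-level counterpart minus a sum of monomials in \emph{strictly higher}-index variables. A system of this shape is invertible, and its inverse is obtained by solving from the top index downward, at each stage substituting the inverse expressions already found.

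I would carry this out level by level. Parts~(3) and~(4) are immediate: in \ref{tzlemma1}(3) the correction terms for $U_1,U_2$ involve only the fixed variables $Z_3,Z_4$, so $Z_1,Z_2$ are recovered simply by transposing those terms and writing $Z_3=U_3,\ Z_4=U_4$; part~(4) is the same one-term inversion with $U_2=T_2,\ U_3=T_3$. Part~(1) costs slightly more: after reading off $X_5=Y_5,\ X_6=Y_6$ and solving for $X_3,X_4,X_1$ directly, the expression for $X_2$ still carries the correction terms $X_5X_3$ and $X_4$; substituting the already-found $X_3=Y_3+(1-r^{-3}s^3)^{-1}r(r^2-s^2)Y_5^2Y_6^{-1}$ and $X_4=Y_4+(1-r^{-3}s^3)^{-1}r^3(r-s)(r^2-s^2)Y_5^3Y_6^{-1}$ generates two further monomials of type $Y_5^3Y_6^{-2}$. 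One checks that their coefficients, namely $(r^3s)(r^3-s^3)(r^2-s^2)(1-r^{-3}s^3)^{-2}$ and $r^4\eta(r-s)(r^2-s^2)(1-r^{-3}s^3)^{-2}$, together with the tail term of \ref{tzlemma1}(1), reproduce exactly the four $Y_5^3Y_6^{-2}$ contributions displayed in Corollary \ref{3.5}(1); here no reordering is needed, since the relevant products merely stack powers of $Y_5$ and of $Y_6^{-1}$ without ever interchanging $Y_5$ and $Y_6$.

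The main obstacle is part~(2), and within it the formulas for $Y_1$ and especially $Y_2$. Isolating $Y_2$ from \ref{tzlemma1}(2) leaves the terms $Y_3^2Y_5^{-1}$, $Y_3Y_4Y_5^{-2}$ and $Y_4Y_3Y_5^{-2}$; substituting $Y_3=Z_3+(1-r^{-1}s)^{-1}Z_4Z_5^{-1}$ forces one to expand the square $Y_3^2$ and to handle monomials such as $Z_4Z_5^{-1}Z_3Z_5^{-1}$, which are not in normal form. This is where the genuine bookkeeping enters: one must invoke the $q$-commutation relations among $Z_3,Z_4,Z_5$ that are inherited from Lemma \ref{12} through the deleting-derivations passage (the $P_{j,i}$-corrections of Lemma \ref{12} having been removed by the change of variables at level $5$), move the inverse powers $Z_5^{-1}$ across $Z_3$ with the appropriate scalar from $\lambda_{5,3}$, and collect. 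Tracking the powers of $r,s$ through these reorderings and verifying that the accumulated scalars --- carrying the factors $(1-r^{-1}s)^{-n}$, $\zeta$, and the listed powers of $r,s$ --- collapse precisely to the coefficients in Corollary \ref{3.5}(2) is the delicate and error-prone step; the deliberate retention of the non-normally-ordered monomials $Z_4Z_5^{-1}Z_4Z_5^{-2}$ and of both orderings $Z_3Z_4Z_5^{-2}$, $Z_4Z_3Z_5^{-2}$ in the stated answer shows that only partial reordering is intended, so one must match the authors' chosen form rather than over-simplify. An alternative, more conceptual route would be to invoke Cauchon's theory directly: the deleting-derivations maps are isomorphisms of the relevant localizations and admit an explicit inverse change-of-variables formula dual to (\ref{tangz03}), from which the identities of Corollary \ref{3.5} follow without the ad hoc algebra; but the direct back-substitution above is self-contained and, once the scalar identities in part~(2) are confirmed, completes the proof.
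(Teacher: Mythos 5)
Your strategy---reading Corollary \ref{3.5} as the inverse of the unitriangular systems of Lemma \ref{tzlemma1} and producing that inverse by back-substitution from the highest index downward---is exactly the intended argument: the paper states this corollary with no proof at all, so inversion of Lemma \ref{tzlemma1} \emph{is} the proof, and your treatment of parts (1), (3), (4), as well as of the $Y_1$ and $Y_3$ identities in part (2), is correct (including the observation that in part (1) the powers of $Y_5$ and $Y_6^{-1}$ merely stack, so no reordering scalar ever appears).

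The problem is that the one computation you defer---checking that the scalars in the $Y_2$ identity of part (2) ``collapse precisely to the coefficients'' printed---is exactly the step that does not go through, so the proof cannot be completed as proposed. Since $Z_3=T_3$, $Z_4=T_4$, $Z_5=T_5$ by parts (3) and (4) of Lemma \ref{tzlemma1}, the commutation rules you need are precisely those of Lemma \ref{relation}: $T_3T_5=r^2s\,T_5T_3$ and $T_3T_4=r^3T_4T_3$, i.e. $Z_5^{-1}Z_3=r^{2}s\,Z_3Z_5^{-1}$ and $Z_4Z_3=r^{-3}Z_3Z_4$. Hence
\begin{equation*}
Z_4Z_5^{-1}\,Z_3Z_5^{-1}\;=\;r^{2}s\,Z_4Z_3Z_5^{-2}\;=\;r^{-1}s\,Z_3Z_4Z_5^{-2},
\end{equation*}
so, writing $c=(1-r^{-1}s)^{-1}$, the two cross terms of $c\,r\zeta\,Y_3^{2}Y_5^{-1}$ contribute $c^{2}(1+r^{-1}s)\,r\zeta\,Z_3Z_4Z_5^{-2}$ and not the printed $2c^{2}r\zeta\,Z_3Z_4Z_5^{-2}$; these agree only if $r^{-1}s=1$, which the standing hypothesis on $r,s$ excludes. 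Similarly, the $Z_4^{2}Z_5^{-3}$ term produced by $-\frac{c^{2}}{1+r^{-1}s}r^{2}s^{2}\,Y_4Y_3Y_5^{-2}$ via $Y_4Y_3=Z_4Z_3+c\,Z_4^{2}Z_5^{-1}$ carries the coefficient $-\frac{c^{3}}{1+r^{-1}s}r^{2}s^{2}$, not the printed $-\frac{c^{4}}{1+r^{-1}s}$. All other terms of part (2) do check out. So the ``delicate and error-prone step'' cannot be waved through: carried out honestly, it shows that the displayed $Y_2$ formula holds only after these two coefficients are corrected, and a complete write-up must perform the reordering explicitly and either establish the corrected identity or record the misprints; asserting that the printed coefficients emerge from the computation is not available, because they do not.
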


Furthermore, we can easily prove by Lemmas \ref{12}, \ref{tzlemma1} the following proposition, which
describes the relations between the variables $T_1, \cdots, T_6$.
\begin{lemma}\label{relation}
The following identities hold:
\begin{eqnarray*}
&&T_1T_6=s^3T_6T_1, \  T_2T_6=(rs^2)^3T_6T_2, \ T_3T_6=(rs)^3T_6T_3, \\
&&T_4T_6=(r^2s)^3T_6T_4, \ T_5T_6=r^3T_6T_5, \  T_1T_5=rs^2T_5T_1,\\
&&T_2T_5=(rs)^3T_5T_2, \ T_3T_5=r^2sT_5T_3, \ T_4T_5=r^3T_5T_4,\\
&&T_1T_4=(rs)^3T_4T_1, \ T_2T_4=(r^2s)^3T_4T_2,  \ T_3T_4=r^3T_4T_3,\\
&&T_1T_3=r^2sT_3T_1, \ T_2T_3=r^3T_3T_2, \ T_1T_2=r^3T_2T_1.
\end{eqnarray*}
\end{lemma}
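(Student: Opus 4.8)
The plan is to recognize Lemma \ref{relation} as the assertion that the terminal variables $T_1,\dots,T_6$ of the deleting-derivations algorithm generate a \emph{quantum affine space}, with commutation parameters equal to the original Ore scalars $\lambda_{j,i}$: indeed, each of the fifteen stated identities is exactly $T_jT_i=\lambda_{j,i}T_iT_j$ for $1\le i<j\le 6$, read off from the table of $\lambda_{j,i}$ in the proof of Corollary \ref{3.1}. For instance, $\lambda_{6,2}=r^{-3}s^{-6}$ gives $T_6T_2=r^{-3}s^{-6}T_2T_6$, i.e.\ $T_2T_6=(rs^2)^3T_6T_2$. This is precisely the output guaranteed by Cauchon's theory \cite{cauchon}, so the cleanest route is to invoke that general result after checking its hypotheses.

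To apply Cauchon's theorem I would verify the two structural hypotheses. First, by Lemma \ref{tangz02} each $\sigma_j$ is an automorphism acting diagonally on the preceding variables, $\sigma_j(X_i)=\lambda_{j,i}X_i$. Second, Corollary \ref{3.1} supplies the crucial relation $\sigma_\ell\circ\delta_\ell=q_\ell\,\delta_\ell\circ\sigma_\ell$ with $q_\ell\in\{r^{-3}s^3,\,r^{-1}s\}$; the standing assumption that $r^ms^n=1$ forces $m=n=0$ shows each $q_\ell$ is not a root of unity (in particular $q_\ell\ne1$), so that every denominator $[n]!_{q_\ell}$ in (\ref{tangz03}) is nonzero. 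This is exactly the genericity needed for the algorithm to make sense and to terminate in a quantum affine space whose parameters coincide with the $\lambda_{j,i}$; reading off those scalars then yields all fifteen identities simultaneously.

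If instead one prefers a self-contained verification using only Lemmas \ref{12} and \ref{tzlemma1}, I would treat the fifteen pairs $(i,j)$ in order of decreasing lower index $i$, exploiting that the substitutions of Lemma \ref{tzlemma1} (and their inverses in Corollary \ref{3.5}) are triangular. The top cases are immediate: since $T_5=X_5$ and $T_6=X_6$, the relation $T_5T_6=r^3T_6T_5$ is just $X_6X_5=r^{-3}X_5X_6$ from Lemma \ref{12}. Next, $T_4=Y_4=X_4-(1-r^{-3}s^3)^{-1}r^3(r-s)(r^2-s^2)X_5^3X_6^{-1}$, so $T_4T_6$ and $T_4T_5$ are obtained by commuting this expression past $X_6=T_6$ and $X_5=T_5$ via Lemma \ref{12}; the correction term $X_5^3X_6^{-1}$ is engineered precisely so that the derivation contributions cancel and only the scalar $\lambda_{4,\cdot}$ survives. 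One then works through $T_3$ and finally $T_2,T_1$, each time substituting from Corollary \ref{3.5} and reducing with Lemma \ref{12}.

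The main obstacle is this last step, the identities involving $T_1$ and $T_2$. Their expressions carry several terms with negative powers of $X_5,X_6$ (and of $X_3,X_4$), so computing $T_2T_j$ against $T_jT_2$ produces many cross terms whose cancellation must be checked. Conceptually these cancellations are forced — it is the whole content of the deleting-derivations mechanism, which at each level replaces the Ore relation $X_\ell X_i=\lambda_{\ell,i}X_iX_\ell+P_{\ell,i}$ by the derivation-free relation $X_\ell^{(\ell)}X_i^{(\ell)}=\lambda_{\ell,i}X_i^{(\ell)}X_\ell^{(\ell)}$ — so I would organize the bookkeeping by the power of $X_5^{-1}$ (respectively $X_6^{-1}$) appearing, verifying that the coefficient of each monomial agrees on both sides. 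This is exactly why invoking Cauchon's theorem is the preferable route: it collapses the entire computation into the one-line hypothesis check of the previous paragraph.
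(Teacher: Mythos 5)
Your proposal is correct, but your preferred route is genuinely different from the paper's. The paper handles Lemma \ref{relation} by direct computation: it asserts that the identities follow from Lemma \ref{12} (the commutation relations among the $X_i$) together with Lemma \ref{tzlemma1} (the explicit change of variables) — that is, exactly your fallback ``self-contained verification,'' with the triangular substitutions and cross-term cancellations you describe but do not carry out. Your main argument instead invokes Cauchon's structural theorem that the deleting-derivations algorithm terminates in a quantum affine space whose commutation scalars are the original $\lambda_{j,i}$; your identification of the fifteen identities with $T_jT_i=\lambda_{j,i}T_iT_j$ for $1\le i<j\le 6$ checks out against the table in the proof of Corollary \ref{3.1} (e.g.\ $\lambda_{6,2}=r^{-3}s^{-6}$ gives $T_2T_6=(rs^2)^3T_6T_2$), and it is consistent with the paper's own framework, since Section 2.3 already records, citing \cite{cauchon} and \cite{meriaux2010}, that $G^{(3)}=G^{(2)}$ is an iterated Ore extension in the $T_i$ with diagonal automorphisms acting by the same $\lambda_{j,i}$ and with no derivations left. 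What your route buys is that all the cancellation bookkeeping collapses into one citation; what the paper's route buys is an independent, explicit check (which also serves as a consistency test of the formulas in Lemma \ref{tzlemma1}). One caveat you should repair: your ``two structural hypotheses'' are not the full hypothesis list for Cauchon's theorem. The Cauchon (CGL) conditions also require each $\delta_j$ to be \emph{locally nilpotent}; the condition that $q_\ell$ is not a root of unity only guarantees that the denominators $[n]!_{q_\ell}$ are nonzero, whereas local nilpotency is what makes the series (\ref{tangz03}) a finite sum inside $Frac(U^+)$ at all. Here it is easily verified — each $\delta_j(X_i)=P_{j,i}$ is a polynomial in $X_{i+1},\dots,X_{j-1}$, and iterating $\delta_j$ pushes everything onto variables on which $\delta_j$ vanishes (for instance $\delta_6(X_5)=0$ forces $\delta_6^3(X_2)=0$), or one can argue by the root-space grading — and the paper tacitly needs it too, but a complete proof along your lines must state and check it.
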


For $\ell\in [\![2, 7]\!]$, we denote by $G^{(\ell)}$ the subalgebra of $Frac(U^+)$ generated  by $X_i^{(\ell)}$ with $i \in [\![1, 6]\!]$. Clearly, $G^{(7)}=U^+$. Analogous to Lemma 4.2.3 in \cite{meriaux2010}, we have

(i) $G^{(6)}=\mathbb{C}[X_1^{(6)}][X_2^{(6)};\sigma_2^{(6)}, \delta_2^{(6)}]...[X_5^{(6)};\sigma_5^{(6)}, \delta_5^{(6)}][X_6^{(6)}; \sigma_6^{(6)}]$;

(ii) $G^{(5)}=\mathbb{C}[X_1^{(5)}][X_2^{(5)};\sigma_2^{(5)}, \delta_2^{(5)}]...[X_4^{(6)};\sigma_4^{(6)}, \delta_4^{(6)}][X_5^{(5)};\sigma_5^{(5)}][X_6^{(6)}; \sigma_6^{(6)}]$;

(iii) $G^{(4)}=\mathbb{C}[X_1^{(4)}][X_2^{(4)};\sigma_2^{(4)}, \delta_2^{(4)}][X_3^{(4)};\sigma_3^{(4)}, \delta_3^{(4)}][X_4^{(4)};\sigma_4^{(4)}][X_5^{(5)};\sigma_5^{(5)}][X_6^{(6)}; \sigma_6^{(6)}]$;

(iv) $G^{(3)}=G^{(2)}=\mathbb{C}[T_1][T_2; \sigma_2^{(3)}][T_3 ;\sigma_3^{(3)}][T_4; \sigma_4^{(3)}][T_5; \sigma_5^{(3)}][T_6; \sigma_6^{(3)}]$.

On the other hand, for $\ell\in [\![2, 7]\!]$, we denote by $S_{\ell}$ the multiplicative system generated by $T_{i}$ with $i \in [\![\ell, 6]\!]$. Since $T_i=X_i^{(\ell)}$ for all $i\in [\![\ell, 6]\!]$, we obtain the localization:
$$
G^{\ell}:=G^{(\ell)}S_{\ell}^{-1},
$$
Note that $G^7=G^{(7)}=U^+$.

For $F=(F_1,F_2,\cdots, F_6)$ and $\gamma=(\gamma_1, \cdots, \gamma_6)\in \mathbb{Z}^6$,
we write $F^{\gamma}:=F_1^{\gamma_1}F_2^{\gamma_2}\cdots F_6^{\gamma_6}$.
It is easy to see that the set
$$\left\{(X^{(\ell)})^\gamma|\gamma\in \mathbb{N}^{\ell-1}\times \mathbb{Z}^{7-\ell}\right\}$$
 is a PBW basis of $G^{\ell}$.  Write $\Sigma_{\ell}:=\{T_{\ell}^k|k \in \mathbb{N}\}$. Then by Theorem 3.2.1 in \cite{cauchon}, we have
 $$
 G^{(\ell)}\Sigma^{-1}_{\ell}=G^{(\ell+1)}\Sigma^{-1}_{\ell}.
 $$

In addition, we denote by $G^1$ the subalgebra of $Frac(U^+)$ generated by the variables $T_1^{\pm 1}, T_2^{\pm 1}, T_3^{\pm 1}$, $T_4^{\pm 1},T_5^{\pm 1}, T_6^{\pm 1}$, which clearly form  a quantum torus. Hence we obtain the relation among $G^{\ell}$.
\begin{lemma}\label{3.3}
For all $\ell \in [\![1, 6]\!]$, we have $G^{\ell}=G^{\ell+1}\Sigma_{\ell}^{-1}$.
\end{lemma}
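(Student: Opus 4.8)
The plan is to reduce the statement to the Cauchon identity $G^{(\ell)}\Sigma_\ell^{-1}=G^{(\ell+1)}\Sigma_\ell^{-1}$ already recorded above (Theorem~3.2.1 of \cite{cauchon}) by carefully bookkeeping the various Ore localizations, treating each $\ell$ directly rather than by induction. The starting observation is that, by Lemma~\ref{relation}, the elements $T_1,\dots,T_6$ pairwise $q$-commute, so each $T_i$ is a normal element and every multiplicative system generated by a subset of $\{T_1,\dots,T_6\}$ is a two-sided Ore set; consequently all the localizations below are well defined and may be performed in any order.

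First I would treat the generic case $\ell\in[\![2,6]\!]$. Since $S_\ell$ is by definition the multiplicative system generated by $T_\ell,T_{\ell+1},\dots,T_6$, it is generated by $\Sigma_\ell=\{T_\ell^k\}$ together with $S_{\ell+1}$; hence the localization $G^\ell=G^{(\ell)}S_\ell^{-1}$ can be carried out in two stages, $G^\ell=(G^{(\ell)}\Sigma_\ell^{-1})S_{\ell+1}^{-1}$. Here I use that $T_i=X_i^{(\ell)}\in G^{(\ell)}$ for all $i\ge\ell$, so that $\Sigma_\ell$ and $S_{\ell+1}$ indeed lie in $G^{(\ell)}$. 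Applying the Cauchon identity gives $G^{(\ell)}\Sigma_\ell^{-1}=G^{(\ell+1)}\Sigma_\ell^{-1}$, whence $G^\ell=(G^{(\ell+1)}\Sigma_\ell^{-1})S_{\ell+1}^{-1}$. Because $T_\ell$ $q$-commutes with each of $T_{\ell+1},\dots,T_6$, the sets $\Sigma_\ell$ and $S_{\ell+1}$ commute as localizing sets, so I may interchange the two stages to obtain $G^\ell=(G^{(\ell+1)}S_{\ell+1}^{-1})\Sigma_\ell^{-1}=G^{\ell+1}\Sigma_\ell^{-1}$, as desired.

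It remains to handle $\ell=1$ separately, since $G^1$ is defined directly as the quantum torus rather than through a Cauchon step. Here I would argue from the explicit presentation $G^{(2)}=\mathbb{C}[T_1][T_2;\sigma_2^{(3)}]\cdots[T_6;\sigma_6^{(3)}]$: localizing $G^{(2)}$ at $S_2$ (the system generated by $T_2,\dots,T_6$) produces $G^2$, in which $T_2,\dots,T_6$ are invertible while $T_1$ is not; inverting $T_1$ as well, that is, localizing at $\Sigma_1=\{T_1^k\}$, yields precisely the subalgebra generated by $T_1^{\pm1},\dots,T_6^{\pm1}$, which is $G^1$. Thus $G^1=G^2\Sigma_1^{-1}$.

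The only genuinely delicate point is the interchange of localizations in the generic case: one must be sure that $\Sigma_\ell$ and $S_{\ell+1}$ are Ore sets in the relevant algebras and that localizing at their union equals localizing at one and then the other in either order. This is exactly what the normality of the $T_i$ (Lemma~\ref{relation}) guarantees, together with the identification $T_i=X_i^{(\ell)}$ for $i\ge\ell$ that places these elements inside $G^{(\ell)}$ and $G^{(\ell+1)}$. Verifying these hypotheses, rather than any hard computation, is where the real content lies.
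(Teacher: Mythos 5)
Your chain of equalities is a genuinely different route from the paper's. The paper proves this lemma by a direct double inclusion inside $Frac(U^+)$: Lemma~\ref{tzlemma1} writes each generator of $G^{\ell}$ as a polynomial in the generators of $G^{(\ell+1)}$ and $T_\ell^{-1}$, Corollary~\ref{3.5} inverts those formulas, and comparing the two generating sets gives $G^{\ell}=G^{\ell+1}\Sigma_\ell^{-1}$. You never touch those explicit formulas: you split $S_\ell$ into $\Sigma_\ell$ followed by $S_{\ell+1}$, apply Cauchon's identity $G^{(\ell)}\Sigma_\ell^{-1}=G^{(\ell+1)}\Sigma_\ell^{-1}$, interchange the two localization stages, and handle $\ell=1$ by inspecting $G^{(2)}$. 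That skeleton is correct, is arguably more conceptual, and would apply verbatim to any algebra processed by the deleting-derivations algorithm, whereas the paper's argument leans on the computations of Lemma~\ref{tzlemma1} and Corollary~\ref{3.5}.

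However, the justification you offer at the point you yourself identify as ``where the real content lies'' is wrong. Lemma~\ref{relation} only governs how the $T_i$ commute with \emph{each other}; it says nothing about how $T_i$ interacts with the remaining generators $X_j^{(\ell)}$, $j<\ell$, of $G^{(\ell)}$ or $G^{(\ell+1)}$, so it cannot yield normality of the $T_i$ in those algebras. Worse, the assertion fails exactly where you need it: $T_\ell=X_\ell^{(\ell+1)}$ is in general \emph{not} normal in $G^{(\ell+1)}$. For instance $T_6=X_6$, and in $U^+=G^{(7)}$ Lemma~\ref{12} gives $X_6X_1=s^{-3}X_1X_6-s^{-3}X_5$, so $X_6U^+\neq U^+X_6$; that $\Sigma_6$ is nevertheless an Ore set in $U^+$ is a theorem of Cauchon resting on the relation $\sigma_\ell\circ\delta_\ell=q_\ell\,\delta_\ell\circ\sigma_\ell$ of Corollary~\ref{3.1}, not on normality. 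There are two ways to repair your argument. First, replace the appeal to Lemma~\ref{relation} by the correct sources: for $i\geq \ell$ the element $T_i$ is normal in $G^{(\ell)}$ because in the decompositions (i)--(iv) of Subsection~\ref{section2.3} the variables of index $\geq \ell$ are adjoined by automorphisms only (no derivations), which makes $\Sigma_\ell$ and $S_{\ell+1}$ Ore in $G^{(\ell)}$ and its localizations; and Ore-ness of $\Sigma_\ell$ in $G^{(\ell+1)}$ is part of the Cauchon theory underlying Theorem 3.2.1 of \cite{cauchon}, which the paper has already invoked. Second, and more cheaply: every algebra in sight is by definition a subalgebra of $Frac(U^+)$, and each ``localization'' here is just the subalgebra generated by adjoining the indicated inverses; read this way, your two-stage splitting and your interchange are equalities of subalgebras with identical generating sets and require no Ore hypotheses at all, the only nontrivial input being Cauchon's identity. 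With either repair your proof is complete.
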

\begin{proof}
By Lemma \ref{tzlemma1}, $T_6=U_6=Z_6=Y_6$. Hence by  Lemma \ref{3.5} we see that $G^{7}\Sigma_{6}^{-1}$ is generated by
$$Y_1, Y_2, Y_3, Y_4, Y_5, Y_6, Y_6^{-1}.$$
Obviously, every elements in $G^{6}$ can be expressed by $G^{7}\Sigma_{6}^{-1}$. This implies  $G^6\subseteq G^{7}\Sigma_{6}^{-1}$. Similarly, we can get $G^\ell \subseteq G^{\ell+1}\Sigma_{\ell}^{-1}$ for every $\ell \in [\![1, 5]\!]$.

Conversely, by Lemma \ref{3.5} we can see that $G^\ell \supseteq G^{\ell+1}\Sigma_{\ell}^{-1}$ for all $\ell \in [\![1, 6]\!]$. The proof is completed.
\end{proof}
As of now,  we can say that $U^+$ may be embedded in $G^1$. From Lemma \ref{3.3}, we have the following corollary.
\begin{corollary}
The following tower of algebras holds:
\begin{eqnarray}
G^{7}=U^+\subset G^{6}=G^7\Sigma_6^{-1}\subset G^{5}=G^6\Sigma_5^{-1}\subset G^{4}=G^5\Sigma_4^{-1}\nonumber\\
\subset G^{3}=G^4\Sigma_3^{-1}\subset G^{2}=G^3\Sigma_2^{-1}\subset G^{1}=G^2\Sigma_1^{-1}.
\end{eqnarray}
\end{corollary}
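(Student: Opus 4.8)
The plan is to read the corollary off as a direct concatenation of Lemma \ref{3.3}: each displayed equality $G^{\ell}=G^{\ell+1}\Sigma_{\ell}^{-1}$ is precisely the content of that lemma for the corresponding value of $\ell$, while $G^{7}=U^+$ is the identification recorded just before the statement of Lemma \ref{3.3} (namely $G^{7}=G^{(7)}=U^+$). Consequently the only genuine work lies in justifying the inclusion signs, that is, in checking that $G^{\ell+1}\subseteq G^{\ell}$ for every $\ell\in[\![1,6]\!]$, and, if one reads $\subset$ as strict, that each such inclusion is proper.

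First I would establish the inclusions. For each $\ell$ the set $\Sigma_{\ell}=\{T_{\ell}^{k}\mid k\in\mathbb{N}\}$ is a multiplicative subset of $G^{\ell+1}$, since $T_{\ell}=X_{\ell}^{(\ell+1)}$ already lies in $G^{(\ell+1)}\subseteq G^{\ell+1}$. Because the canonical map from a ring into its localization is an embedding, we get $G^{\ell+1}\subseteq G^{\ell+1}\Sigma_{\ell}^{-1}$, and Lemma \ref{3.3} identifies the right-hand side with $G^{\ell}$. Applying this successively for $\ell=6,5,4,3,2,1$ produces the chain $U^+=G^{7}\subseteq G^{6}\subseteq\cdots\subseteq G^{1}$, and inserting the equalities of Lemma \ref{3.3} into each link yields exactly the asserted tower.

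Next, to see that each inclusion is proper I would invoke the PBW bases stated just before Lemma \ref{3.3}. The algebra $G^{\ell}$ has basis $\{(X^{(\ell)})^{\gamma}\mid\gamma\in\mathbb{N}^{\ell-1}\times\mathbb{Z}^{7-\ell}\}$, whereas the basis of $G^{\ell+1}$ is indexed by $\gamma\in\mathbb{N}^{\ell}\times\mathbb{Z}^{6-\ell}$. Since $T_{\ell}=X_{\ell}^{(\ell)}$, the element $T_{\ell}^{-1}$ is the basis vector whose multi-index carries $-1$ in the $\ell$-th slot and $0$ elsewhere; this index belongs to the index set of $G^{\ell}$ but fails the nonnegativity requirement in the $\ell$-th coordinate for $G^{\ell+1}$. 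Hence $T_{\ell}^{-1}\in G^{\ell}\setminus G^{\ell+1}$, so $G^{\ell+1}\subsetneq G^{\ell}$.

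I do not anticipate a substantive obstacle here: all the real content has already been discharged in Lemma \ref{3.3} and in the Ore-extension and PBW-basis descriptions of the $G^{(\ell)}$, so the corollary is essentially a transcription. The only points demanding a little care are keeping the two localizations $G^{\ell}=G^{(\ell)}S_{\ell}^{-1}$ and $G^{\ell+1}=G^{(\ell+1)}S_{\ell+1}^{-1}$ distinct while tracking the single newly inverted generator $T_{\ell}$, and making the multi-index bookkeeping for strictness line up with the stated PBW bases; everything else follows formally.
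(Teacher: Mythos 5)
Your proposal is correct and takes essentially the same route as the paper: the corollary is recorded there as an immediate consequence of Lemma \ref{3.3} together with the identification $G^{7}=G^{(7)}=U^{+}$, the inclusions being automatic since every $G^{\ell}$ is a subalgebra of $Frac(U^{+})$ and $G^{\ell}=G^{\ell+1}\Sigma_{\ell}^{-1}\supseteq G^{\ell+1}$. Your additional strictness argument goes beyond what the paper asserts, and as written it conflates the PBW bases of $G^{\ell}$ and $G^{\ell+1}$ (which are monomials in the \emph{different} variable sets $X^{(\ell)}$ and $X^{(\ell+1)}$), so it would need a small repair, but this is not needed for the statement itself.
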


We give a formula which will be useful for calculating derivations later.

\begin{corollary}\label{3.6}
For every positive integer $k$, one has
\begin{equation}\label{zhongt666}
U_3^{-k}U_1=(r^2s)^{k}U_1U_3^{-k}-d_kU_2U_3^{-(k+1)}
\end{equation}
where  $d_1=(1-r^{-1}s)^{-1}(r^2s-r^3)$ and $d_k=(r^2s)^{k-1}d_1+r^3d_{k-1}$ for $k\ge 2$.
\end{corollary}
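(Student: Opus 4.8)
The plan is to first extract from the change of variables the two commutation relations that govern $U_1,U_2,U_3$, and then run a single induction on $k$. By Corollary \ref{3.5}(4) we have $U_2=T_2$, $U_3=T_3$ and $U_1=T_1+(1-r^{-1}s)^{-1}T_2T_3^{-1}$, so every term appearing in (\ref{zhongt666}) can be rewritten in the $q$-commuting variables $T_1,T_2,T_3$, all of which are invertible in the division ring $Frac(U^+)$. The only relations from Lemma \ref{relation} that enter are $T_1T_3=r^2sT_3T_1$ and $T_2T_3=r^3T_3T_2$.

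First I would record two auxiliary identities among the $U_i$. From $T_2T_3=r^3T_3T_2$ together with $U_2=T_2$, $U_3=T_3$ one gets $U_2U_3=r^3U_3U_2$, hence $U_3^{-1}U_2=r^3U_2U_3^{-1}$ and, by iteration, $U_3^{-m}U_2=r^{3m}U_2U_3^{-m}$. Next, substituting the expression for $U_1$ and using the torus relations $T_3^{-1}T_1=r^2sT_1T_3^{-1}$ and $T_3^{-1}T_2=r^3T_2T_3^{-1}$, a direct computation yields the base relation
$$U_3^{-1}U_1=(r^2s)U_1U_3^{-1}-d_1U_2U_3^{-2},\qquad d_1=(1-r^{-1}s)^{-1}(r^2s-r^3),$$
which is just the clean relation $U_1U_3=(r^2s)U_3U_1+U_2$ conjugated by $U_3^{-1}$. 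This settles the case $k=1$.

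For the inductive step I would assume (\ref{zhongt666}) for $k$ and left-multiply by $U_3^{-1}$, writing $U_3^{-(k+1)}U_1=U_3^{-1}\bigl((r^2s)^kU_1U_3^{-k}-d_kU_2U_3^{-(k+1)}\bigr)$. Applying the base relation to $U_3^{-1}U_1$ in the first summand and the commutation $U_3^{-1}U_2=r^3U_2U_3^{-1}$ in the second, both resulting correction terms are multiples of $U_2U_3^{-(k+2)}$; collecting them produces $U_3^{-(k+1)}U_1=(r^2s)^{k+1}U_1U_3^{-(k+1)}-d_{k+1}U_2U_3^{-(k+2)}$ with $d_{k+1}=(r^2s)^kd_1+r^3d_k$, which is exactly the asserted recursion (with $k$ replaced by $k+1$).

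All of this is routine $q$-commutation bookkeeping; the only step needing care is the base case, where one must check that the coefficient of $U_2U_3^{-2}$ emerging from the $T$-variable computation is indeed $-d_1=(1-r^{-1}s)^{-1}(r^3-r^2s)$. As a consistency check this simplifies to $r^3$ via $(1-r^{-1}s)^{-1}=r(r-s)^{-1}$, which is reassuring but not needed for the statement. Once the base relation and the $U_2$--$U_3$ commutation are established, the induction transports the recursion through verbatim, so I expect no essential difficulty beyond that initial verification.
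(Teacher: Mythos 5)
Your proposal is correct and follows essentially the same route as the paper's proof: rewrite $U_1,U_2,U_3$ in the $q$-commuting variables $T_1,T_2,T_3$ via $U_1=T_1+(1-r^{-1}s)^{-1}T_2T_3^{-1}$, verify the base case $k=1$ by direct computation with the relations $T_1T_3=r^2sT_3T_1$, $T_2T_3=r^3T_3T_2$, and then induct by left-multiplying by $U_3^{-1}$, which produces exactly the recursion $d_{k+1}=(r^2s)^kd_1+r^3d_k$. Your additional observation that $d_1$ simplifies to $-r^3$ (equivalently, the clean relation $U_1U_3=(r^2s)U_3U_1+U_2$) is a correct and tidy remark that the paper does not make, but it does not change the structure of the argument.
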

\begin{proof}
Lemma \ref{tzlemma1} tells us $T_1= U_1-(1-r^{-1}s)^{-1}U_2U_3^{-1}$ (or equivalently, $U_1=T_1+(1-r^{-1}s)^{-1}U_2U_3^{-1}$) and $T_2=U_2, \ T_3=U_3, \ T_4=U_4,\ T_5=U_5,\ T_6=U_6$.
We shall show the conclusion by using induction on $k$.  By applying Lemma \ref{relation} with the above relations on $T_i$ and $U_i$ repeatedly, we have
\begin{eqnarray*}
U_3^{-1}U_1&=&U_3^{-1}T_1+(1-r^{-1}s)^{-1}U_3^{-1}U_2U_3^{-1}\\
&=&r^2sT_1T_3^{-1}+(1-r^{-1}s)^{-1}T_3^{-1}T_2T_3^{-1}\\
&=&r^2s(U_1-(1-r^{-1}s)^{-1}T_2T_3^{-1})T_3^{-1}+(1-r^{-1}s)^{-1}r^3T_2T_3^{-2}\\
&=&r^2sU_1U_3^{-1}-(1-r^{-1}s)^{-1}(r^2s-r^3)U_2U_3^{-2},
\end{eqnarray*}
which implies (\ref{zhongt666}) is true for $k=1$.  We assume that it is true for
$k=i (i\ge 1)$, i.e., $U_3^{-i}U_1=(r^2s)^{i}U_1U_3^{-i}-d_iU_2U_3^{-(i+1)}$. For $k=i+1$, it follows from Lemmas \ref{tzlemma1},\ref{relation} and the inductive hypothesis that
\begin{eqnarray*}
&&U_3^{-(i+1)}U_1=U_3^{-1}((r^2s)^{i}U_1U_3^{-i}-d_iU_2U_3^{-(i+1)})\\
&&\quad=(r^2s)^{i}U_3^{-1}U_1U_3^{-i}-d_ir^3U_2U_3^{-(i+2)})\\
&&\quad=(r^2s)^{i+1}U_1U_3^{-(i+1)}-((r^2s)^i(1-r^{-1}s)^{-1}(r^2s-r^3)+r^3d_i)U_2U_3^{-(i+2)}\\
&&\quad=(r^2s)^{i+1}U_1U_3^{-(i+1)}-d_{i+1}U_2U_3^{-(i+2)},
\end{eqnarray*}
we obtain that (\ref{zhongt666}) is true for $k=i+1$, and hence the proof is completed.
\end{proof}

By Corollary \ref{3.6}, it is easy to get the following result.

\begin{proposition}\label{3.7}
Denote by $G$ the subalgebra of $G^4$ generated by $U_j$ with $j \neq 3$, $U_4^{-1}$, $U_5^{-1}$ and $U_6^{-1}$, i.e.,
$$G=\langle U_1,U_2,U_4^{\pm 1}, U_5^{\pm 1}, U_6^{\pm 1}\rangle.$$
Then $G^4$ is a free left $G$-module with basis $(U_3^c)_{c \in \mathbb{Z}}$.
\end{proposition}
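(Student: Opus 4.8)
The plan is to realize the asserted decomposition as $G^{4}=\bigoplus_{c\in\mathbb{Z}} G\,U_{3}^{\,c}$ and to read the left-$G$-module structure directly off the iterated Ore presentation of $G^{(4)}$ together with its localization $G^{4}=G^{(4)}S_{4}^{-1}$. First I would check that $G$ really is a subalgebra of $G^{4}$ and pin down its PBW basis. In the presentation $G^{(4)}=\mathbb{C}[U_{1}][U_{2};\sigma_{2}^{(4)},\delta_{2}^{(4)}][U_{3};\sigma_{3}^{(4)},\delta_{3}^{(4)}][U_{4};\sigma_{4}^{(4)}][U_{5};\sigma_{5}^{(5)}][U_{6};\sigma_{6}^{(6)}]$ the generators $U_{4},U_{5},U_{6}$ are adjoined by automorphisms alone, so each quasi-commutes with $U_{1},U_{2},U_{3}$ with no derivation tail, while $\delta_{2}^{(4)}(U_{1})=P_{2,1}^{(4)}=0$. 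Consequently all the pairwise relations among $U_{1},U_{2},U_{4},U_{5},U_{6}$ are pure scalar quasi-commutations (as recorded by the corresponding entries of Lemma \ref{relation}) and none of them manufactures a factor of $U_{3}$. Hence $G=\langle U_{1},U_{2},U_{4}^{\pm1},U_{5}^{\pm1},U_{6}^{\pm1}\rangle$ is closed under multiplication and inherits the PBW basis $\{U_{1}^{a}U_{2}^{b}U_{4}^{d}U_{5}^{e}U_{6}^{f}\,:\,a,b\in\mathbb{N},\ d,e,f\in\mathbb{Z}\}$ from the ambient PBW basis of $G^{4}$.

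Next I would show that $M:=\sum_{c\in\mathbb{Z}} G\,U_{3}^{\,c}$ coincides with $G^{4}$. Since $M$ contains every generator of $G^{4}$ (the generators of $G$ lie in $G=G\,U_{3}^{0}$, and $U_{3}^{\pm1}\in M$), it suffices to prove that $M$ is closed under multiplication, and the only nontrivial point there is that $U_{3}^{\,c}g\in M$ for every $g\in G$ and $c\in\mathbb{Z}$. By expanding $g$ in the PBW basis of $G$ this reduces to the generators. For $U_{2},U_{4},U_{5},U_{6}$ the relations of Lemma \ref{relation} give $U_{3}^{\,c}U_{2}=r^{3c}U_{2}U_{3}^{\,c}$, $U_{3}^{\,c}U_{4}=r^{-3c}U_{4}U_{3}^{\,c}$, $U_{3}^{\,c}U_{5}=(r^{2}s)^{-c}U_{5}U_{3}^{\,c}$, $U_{3}^{\,c}U_{6}=(rs)^{-3c}U_{6}U_{3}^{\,c}$, so these move past any power of $U_{3}$ at the cost of a scalar and stay in $M$. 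For $U_{1}$ the decisive tool is Corollary \ref{3.6}, whose formula $U_{3}^{-k}U_{1}=(r^{2}s)^{k}U_{1}U_{3}^{-k}-d_{k}U_{2}U_{3}^{-(k+1)}$ exhibits $U_{3}^{-k}U_{1}\in G\,U_{3}^{-k}+G\,U_{3}^{-(k+1)}\subseteq M$, and the companion identity for positive powers (obtained from $U_{3}U_{1}=\lambda_{3,1}U_{1}U_{3}+\delta_{3}^{(4)}(U_{1})$ with $\delta_{3}^{(4)}(U_{1})\in G$) lands in $M$ as well. Thus $M$ is a subalgebra containing all generators, whence $M=G^{4}$ and the $U_{3}^{\,c}$ span $G^{4}$ over $G$.

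Finally I would verify freeness by grading the ambient PBW basis of $G^{4}$ according to the exponent of $U_{3}$. The assignment $U_{1}^{a}U_{2}^{b}U_{4}^{d}U_{5}^{e}U_{6}^{f}\longmapsto U_{1}^{a}U_{2}^{b}U_{4}^{d}U_{5}^{e}U_{6}^{f}U_{3}^{\,c}$ carries the PBW basis of $G$ bijectively onto the degree-$c$ part of the PBW basis of $G^{4}$, so a relation $\sum_{c}g_{c}U_{3}^{\,c}=0$ with $g_{c}\in G$, upon expanding each $g_{c}$ in its own PBW basis and comparing coefficients degree by degree in $U_{3}$, forces every $g_{c}=0$. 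This gives that the sum $M=\bigoplus_{c\in\mathbb{Z}} G\,U_{3}^{\,c}$ is direct with each summand free of rank one, i.e. $(U_{3}^{\,c})_{c\in\mathbb{Z}}$ is a free left-$G$-basis of $G^{4}$.

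The step I expect to be the main obstacle is the closure argument of the second paragraph, specifically controlling the derivation correction produced when a power of $U_{3}$ is commuted past $U_{1}$: because $\delta_{3}^{(4)}$ is nonzero, transporting $U_{3}^{\,c}$ to the right of $U_{1}$ does not merely rescale but produces a tail, and one must confirm that this tail stays inside $M$ for all $c\in\mathbb{Z}$ simultaneously. Corollary \ref{3.6} is engineered precisely to settle this for the negative powers of $U_{3}$ that make the indexing range over all of $\mathbb{Z}$; the remaining bookkeeping is to record the analogous rule for positive powers and to check, by an induction on the length of a word in the generators, that iterating these rewrites terminates with a finite left-$G$-combination of the $U_{3}^{\,c}$.
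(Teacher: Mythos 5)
Your proposal is correct in substance and follows the paper's route exactly: the paper's entire proof of Proposition \ref{3.7} is the remark that it follows from Corollary \ref{3.6}, and your three steps --- identifying the PBW basis of $G$ inside that of the ambient algebra, spanning via the scalar quasi-commutation of $U_3^{c}$ with $U_2,U_4^{\pm1},U_5^{\pm1},U_6^{\pm1}$ together with Corollary \ref{3.6} for negative powers of $U_3$ past $U_1$ and the Ore relation $U_3U_1=r^{-2}s^{-1}U_1U_3-r^{-2}s^{-1}U_2$ for positive powers, then freeness by grading the PBW monomials by the $U_3$-exponent --- are precisely the intended elaboration. Two blemishes, neither fatal: your displayed quasi-commutation scalars are systematically inverted (Lemma \ref{relation} gives, e.g., $U_3^{c}U_5=(r^{2}s)^{c}U_5U_3^{c}$, not $(r^{2}s)^{-c}$), which is harmless since only their nonvanishing is used; and for $c$ ranging over all of $\mathbb{Z}$ the ambient algebra must really be read as $G^{3}=G^{4}\Sigma_{3}^{-1}$, because the paper's own PBW basis $\mathbb{N}^{3}\times\mathbb{Z}^{3}$ for $G^{4}$ does not contain $U_3^{-1}$ --- a defect of the statement itself (which is indeed applied in Lemma \ref{lm2} to elements of $G^{3}$) that your argument inherits rather than creates.
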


\subsection{Centers of the algebra $G^{\ell}$}

For
$\gamma=(\gamma_1, \cdots, \gamma_6)$, the following equations will be used many times:
\begin{eqnarray}
3\gamma_2+2\gamma_3+3\gamma_4+\gamma_5=0, \label{zt001}\\
\gamma_3+3\gamma_4+2\gamma_5+3\gamma_6=0, \label{zt002}\\
-3\gamma_1+3\gamma_3+6\gamma_4+3\gamma_5+3\gamma_6=0, \label{zt003}\\
3\gamma_4+3\gamma_5+6\gamma_6=0, \label{zt004}\\
-2\gamma_1+3\gamma_2+3\gamma_4+2\gamma_5+3\gamma_6=0, \label{zt005}\\
-\gamma_1+\gamma_5+3\gamma_6=0, \label{zt006}\\
-3\gamma_1-6\gamma_2-3\gamma_3+3\gamma_5+6\gamma_6=0, \label{zt007}\\
-3\gamma_1-3\gamma_2+3\gamma_6=0,  \label{zt008}\\
-\gamma_1-3\gamma_2-2\gamma_3-3\gamma_4+3\gamma_6=0,  \label{zt009}\\
-2\gamma_1-3\gamma_2-\gamma_3=0,  \label{zt0010}\\
-3\gamma_2-3\gamma_3-6\gamma_4-3\gamma_5=0,  \label{zt0011}\\
-3\gamma_1-6\gamma_2-3\gamma_3-3\gamma_4=0.  \label{zt0012}
\end{eqnarray}

\quad In ring theory, the center $Z(R)$ of a ring $R$ is the subring consisting of the elements $x$ such that $xy = yx$ for all elements $y$ in $R$. Further, we have
\begin{lemma}\label{center}
For every $i\in [\![1, 7]\!]$, one has $Z(G^i)=\mathbb{C}$.
\end{lemma}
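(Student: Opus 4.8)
The plan is to reduce the entire statement, which concerns seven different algebras, to the single computation of the center of the quantum torus $G^1$, and then to carry out that computation by diagonalizing the conjugation action on the PBW monomials and solving a homogeneous linear system in their exponents.

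First I would show that moving up the tower can only enlarge the center. For each $\ell\in[\![1,6]\!]$ we have $G^\ell=G^{\ell+1}\Sigma_\ell^{-1}$ by Lemma \ref{3.3}, an Ore localization at $\Sigma_\ell=\{T_\ell^k\}$ with $T_\ell\in G^{\ell+1}$. I claim $Z(G^{\ell+1})\subseteq Z(G^\ell)$: if $z\in Z(G^{\ell+1})$ then $z$ commutes with every element of $G^{\ell+1}$, in particular with $T_\ell$, and from $zT_\ell=T_\ell z$ we obtain $zT_\ell^{-1}=T_\ell^{-1}z$. Since $G^\ell$ is generated as an algebra by $G^{\ell+1}$ together with $T_\ell^{-1}$, and the centralizer of $z$ is a subalgebra, $z$ lies in $Z(G^\ell)$. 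Iterating along $G^{7}\subset G^{6}\subset\cdots\subset G^{1}$ gives $Z(G^i)\subseteq Z(G^1)$ for every $i\in[\![1,7]\!]$. As $\mathbb{C}\subseteq Z(G^i)$ always holds, it suffices to prove $Z(G^1)=\mathbb{C}$.

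Next I would compute $Z(G^1)$. By construction $G^1$ is the quantum torus with PBW basis $\{T^\gamma\mid\gamma\in\mathbb{Z}^6\}$ and the skew-commutation relations of Lemma \ref{relation}, so conjugation by $T_k$ acts diagonally: $T_k^{-1}T^\gamma T_k=c_k(\gamma)\,T^\gamma$ with $c_k(\gamma)\in\mathbb{C}^*$ read off from Lemma \ref{relation}. Writing $z=\sum_\gamma a_\gamma T^\gamma$, the equation $T_k^{-1}zT_k=z$ forces $c_k(\gamma)=1$ for every $\gamma$ with $a_\gamma\neq 0$ and every $k$; hence $z$ is central if and only if each monomial it contains is central. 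For a fixed $\gamma$, each of the six conditions $c_k(\gamma)=1$ has the form $r^{M}s^{N}=1$ with $M,N$ integer-linear in $\gamma$, and the standing hypothesis that $r^m s^n=1$ implies $m=n=0$ splits each into the pair $M=0$, $N=0$. Running over $k=1,\dots,6$ produces precisely the twelve equations (\ref{zt001})--(\ref{zt0012}).

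Finally I would solve this system. From (\ref{zt006}), (\ref{zt008}) and (\ref{zt0010}) one solves successively $\gamma_1=\gamma_5+3\gamma_6$, $\gamma_2=-\gamma_5-2\gamma_6$ and $\gamma_3=\gamma_5$, while (\ref{zt004}) gives $\gamma_4=-\gamma_5-2\gamma_6$. Substituting into (\ref{zt001}) yields $\gamma_5=-4\gamma_6$, and then (\ref{zt002}) forces $\gamma_6=0$, whence $\gamma=0$. Thus the only central monomial is $T^0=1$, so $Z(G^1)=\mathbb{C}$, and the proof is complete. The conceptual content is entirely in the localization step, which lets one torus computation settle all seven centers at once; the only genuine obstacle is the careful bookkeeping that records the conjugation scalars $c_k(\gamma)$ correctly from Lemma \ref{relation} and then verifies that the resulting $12\times 6$ homogeneous system has trivial kernel.
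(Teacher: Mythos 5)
Your proof is correct, and its core computation coincides with the paper's: conjugating the PBW monomials $T^\gamma$ by each $T_k$ produces exactly the twelve homogeneous equations (\ref{zt001})--(\ref{zt0012}), whose only solution is $\gamma=0$ (your elimination $\gamma_1=\gamma_5+3\gamma_6$, $\gamma_2=\gamma_4=-\gamma_5-2\gamma_6$, $\gamma_3=\gamma_5$, then $\gamma_5=-4\gamma_6$ and finally $\gamma_6=0$ checks out). Where you genuinely diverge is in how the other six centers are reduced to this single torus computation. The paper repeats the monomial argument separately for $Z(G^2)$ and $Z(G^3)$ on their respective PBW bases, and only for $G^4,\dots,G^7$ uses inclusions $Z(G^{i})\subseteq Z(G^{i-1})$, which it proves by hand from the change-of-variables formulas of Lemma \ref{tzlemma1} (e.g., expanding $aU_1=U_1a$ with $U_1=T_1+(1-r^{-1}s)^{-1}U_2U_3^{-1}$ to conclude $aT_1=T_1a$). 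You instead prove $Z(G^{\ell+1})\subseteq Z(G^\ell)$ uniformly for every $\ell$, using nothing but the localization $G^\ell=G^{\ell+1}\Sigma_\ell^{-1}$ of Lemma \ref{3.3}: a central element of $G^{\ell+1}$ commutes with $T_\ell$, hence with $T_\ell^{-1}$, hence with all of $G^\ell$, since its centralizer is a subalgebra containing the generators. This buys uniformity and brevity (one lemma applied six times instead of three direct computations plus four ad hoc inclusion arguments), and it is also slightly more careful than the paper on one point: the paper tests centrality only on monomials $T^\gamma$, whereas your remark that conjugation by the $T_k$ acts diagonally on the PBW basis---so a general element is central iff every monomial in its support is---makes that step airtight. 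What the paper's route offers in exchange is only explicitness; its direct computations for $G^2$ and $G^3$ are logically subsumed by your inclusion chain.
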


\begin{proof}
Recall that, for $\ell\in [\![2, 6]\!]$, the set
$\left\{(X^{(\ell)})^\gamma|\gamma\in \mathbb{N}^{\ell-1}\times \mathbb{Z}^{7-\ell}\right\}$
 is a PBW basis of $G^{\ell}$ in view of Section \ref{section2.3} Note that Lemma 2.7 tells us that
 $Y_i:=X_i^{(6)}, Z_i:=X_i^{(5)}, U_i:=X_i^{(4)}, T_i:=X_i^{(3)}$ and $X_i^{(2)}=T_i$ for all $i\in [\![1, 6]\!]$.

We first prove that $Z(G^1)=Z(G^2)=Z(G^3)=\mathbb{C}$.

If $T^{\gamma}=T_1^{\gamma_1}T_2^{\gamma_2}\cdots T_6^{\gamma_6}$ in the center of the algebra $G^1$ then $T^{\gamma}T_i=T_iT^{\gamma}$ for all $i\in [\![1, 6]\!]$. By Lemma \ref{relation} we have
\begin{eqnarray*}
T_1T^{\gamma}&=&T_1T_1^{\gamma_1}T_2^{\gamma_2}T_3^{\gamma_3}T_4^{\gamma_4}T_5^{\gamma_5}T_6^{\gamma_6}\\
             &=&r^{3\gamma_2+2\gamma_3+3\gamma_4+\gamma_5}s^{\gamma_3+3\gamma_4+2\gamma_5+3\gamma_6}T_1^{\gamma_1}T_2^{\gamma_2}T_3^{\gamma_3}T_4^{\gamma_4}T_5^{\gamma_5}T_6^{\gamma_6}T_1\\
             &=&r^{3\gamma_2+2\gamma_3+3\gamma_4+\gamma_5}s^{\gamma_3+3\gamma_4+2\gamma_5+3\gamma_6}T^{\gamma}T_1.
\end{eqnarray*}
This together with $T_1T^{\gamma}=T^{\gamma}T_1$ gives $r^{3\gamma_2+2\gamma_3+3\gamma_4+\gamma_5}s^{\gamma_3+3\gamma_4+2\gamma_5+3\gamma_6}=1$. Therefore, we obtain Equations (\ref{zt001}) and (\ref{zt002}). Similarly, by
$T_iT^{\gamma}=T^{\gamma}T_i, i=2,\cdots, 6$, we have Equations (\ref{zt003})-(\ref{zt0012}) hold. It follows that $\gamma_1=\gamma_2=\gamma_3=\gamma_4=\gamma_5=\gamma_6=0$. Then the center of $G^{1}$ is equal to $\mathbb{C}$.
On the other hand, note that $G^2$ is generated by $(X_1^{(2)})^{\gamma_1}(X_2^{(2)})^{\gamma_2}\cdots (X_6^{(2)})^{\gamma_6}=T_1^{\gamma_1}T_2^{\gamma_2}\cdots T_6^{\gamma_6}=T^{\gamma}$ where $\gamma_1\in \mathbb{N}$ and $\gamma_i\in \mathbb{Z}$, $i\in [\![2,6]\!]$. We assume that $T^{\gamma} \in Z(G^2)$. So we have $T^{\gamma}T_i=T_iT^{\gamma}$, for $i \in [\![1,6]\!]$.  Similar to the case of $Z(G^1)$, we have Equations (\ref{zt001})-(\ref{zt0012}) hold and so that $\gamma=0$, i.e., $Z(G^2)=\mathbb{C}$. Note that $T_i:=X_i^{(3)}, i\in [\![1, 6]\!]$ yields $G^3$ is generated by $T^\gamma$ with $\gamma_1,\gamma_2\in \mathbb{N}$ and $\gamma_i\in \mathbb{Z}$, $i\in [\![3,6]\!]$. Similar to the case of $Z(G^1)$ or $Z(G^2)$, we also have  $Z(G^3)=\mathbb{C}$.

Next, we prove that $Z(G^4)=Z(G^5)=Z(G^6)=Z(G^7)=\mathbb{C}$.

 Note that $G^4$ is generated by $(X_1^{(4)})^{\gamma_1}(X_2^{(4)})^{\gamma_2}\cdots (X_6^{(4)})^{\gamma_6}=U_1^{\gamma_1}U_2^{\gamma_2}\cdots U_6^{\gamma_6}$ where $\gamma_1,\gamma_2,\gamma_3\in \mathbb{N}$ and $\gamma_4,\gamma_5,\gamma_6\in \mathbb{Z}$. By Lemma \ref{tzlemma1}, $U_1=T_1+(1-r^{-1}s)^{-1}U_2U_3^{-1}$ and $T_2=U_2, \ T_3=U_3, \ T_4=U_4,\ T_5=U_5,\ T_6=U_6$.
 We assume that $a \in Z(G^4)$. Then we have $a U_i=U_ia, i\in [\![1,3]\!]$  and $a T_j^{\pm 1}=T_j^{\pm 1} a$ for $j \in [\![4, 6]\!]$.  Therefore one has $aT_i=T_ia$  for $i \in [\![2, 6]\!]$. It follows by $aU_1=U_1a$ and $aU_1=a(T_1+(1-r^{-1}s)^{-1}U_2U_3^{-1})=aT_1+(1-r^{-1}s)^{-1}aT_2T_3^{-1}=aT_1+(1-r^{-1}s)^{-1}T_2T_3^{-1}a$
that $aT_1=T_1a$. Then we have $a \in Z(G^3)=\mathbb{C}$. Therefore $Z(G^4)=\mathbb{C}$. Similarly, by Lemma  \ref{tzlemma1} we obtain that $Z(G^{5})\subseteq Z(G^{4})=\mathbb{C}$, $Z(G^{6})\subseteq Z(G^{5})=\mathbb{C}$ and $Z(G^{7})\subseteq Z(G^{6})=\mathbb{C}$.

The proof is completed.
\end{proof}

\section{Derivations and the Hochschild Cohomology Group of Degree 1 of $U_{r,s}^+(G_2)$}

\subsection{Derivations of $U_{r,s}^+(G_2)$}
In this section, we denote by $Der(U^+)$ the set of $\mathbb{C}$-derivations of $U^+$.
We now define two derivations $D_5$ and $D_6$ of $U^+$ as follows.
\begin{definition}\label{XGL}
Let $D_5$ and $D_6$ be two derivations of $U^+$ determined by
\begin{eqnarray*}
D_5(X_1)=X_1, & D_5(X_2)=3X_2, & D_5(X_3)=2X_3,\\
D_5(X_4)=3X_4, &  D_5(X_5)=X_5, & D_5(X_6)=0;\\
D_6(X_1)=-X_1, & D_6(X_2)=-2X_2, &D_6(X_3)=-X_3,\\
D_6(X_4)=-X_4, &  D_6(X_5)=0, &D_6(X_6)=X_6.
\end{eqnarray*}
\end{definition}

Our main result is following.
\begin{theorem}\label{maintheo}
Every derivation $D$ of $U^+$ can be uniquely written as
\begin{equation*}
D=ad_g+\mu_5D_5+\mu_6D_6,
\end{equation*}
where $g \in U^+$, $\mu_5, \mu_6 \in \mathbb{C}$ and $D_5, D_6$ are given by Definition \ref{XGL}.
\end{theorem}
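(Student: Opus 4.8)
The plan is to use the embedding $U^+\hookrightarrow G^1$ into the quantum torus from Subsection 2.3: first classify the derivations of $G^1$, then transfer the information back to $U^+=G^7$. Since each inclusion in the tower $U^+=G^7\subset G^6\subset\cdots\subset G^1$ is an Ore localization $G^\ell=G^{\ell+1}\Sigma_\ell^{-1}$ (Lemma \ref{3.3}) and $U^+$ is a domain, every $D\in Der(U^+)$ extends uniquely, by the quotient rule, to a derivation $\bar D$ of $G^1=\mathbb{C}\langle T_1^{\pm1},\dots,T_6^{\pm1}\rangle$. Now $G^1$ is $\mathbb{Z}^6$-graded by $T^\gamma\mapsto\gamma$, with one-dimensional homogeneous components, and by Lemma \ref{center} its center is $\mathbb{C}$. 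Writing the finite Laurent expansions of the $\bar D(T_i)$ and collecting homogeneous parts, $\bar D=\sum_\gamma\bar D_\gamma$ has only finitely many nonzero $\mathbb{Z}^6$-homogeneous components. A standard computation for quantum tori shows that for $\gamma\neq0$ the triviality of the center forces $\bar D_\gamma=ad_{c_\gamma T^\gamma}$ for a unique scalar $c_\gamma$ (one solves $\bar D_\gamma(T_i)=c_\gamma(1-\lambda_{\gamma,i})T^\gamma T_i$, where $T_iT^\gamma=\lambda_{\gamma,i}T^\gamma T_i$, using that some $\lambda_{\gamma,i}\neq1$), while $\bar D_0(T_i)\in\mathbb{C}T_i$ is a diagonal (toral) derivation. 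Hence $Der(G^1)=ad_{G^1}\oplus\mathcal T$, where $\mathcal T=\{\partial_\mu:\partial_\mu(T_i)=\mu_iT_i,\ \mu\in\mathbb{C}^6\}$; the sum is direct because the degree-$0$ part of any $ad_a$ is $ad_{a_0}$ with $a_0\in(G^1)_0=\mathbb{C}$, hence zero.

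Next I would bring in the root-lattice grading. The algebra $U^+$ is graded by $Q=\mathbb{Z}\alpha_1\oplus\mathbb{Z}\alpha_2$ with $\deg X_1=\alpha_1$, $\deg X_6=\alpha_2$, and $\deg X_2,\deg X_3,\deg X_4,\deg X_5=3\alpha_1+\alpha_2,\,2\alpha_1+\alpha_2,\,3\alpha_1+2\alpha_2,\,\alpha_1+\alpha_2$; the defining relations are $Q$-homogeneous. The change of variables of Lemma \ref{tzlemma1} and Corollary \ref{3.5} is $Q$-homogeneous (each correction term carries the degree of the variable it corrects), so every $T_i$ is $Q$-homogeneous of the same degree as $X_i$ and the embedding is graded. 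Decomposing $D=\sum_{\beta\in Q}D_\beta$ into $Q$-homogeneous components, each $D_\beta$ still maps $U^+$ into $U^+$, since homogeneous components of a derivation preserving a graded subalgebra preserve it. For $\beta=0$: because $U^+_{\alpha_1}=\mathbb{C}X_1$ and $U^+_{\alpha_2}=\mathbb{C}X_6$ (the only monomials in $e_1,e_2$ of these degrees), one has $D_0(X_1)=\lambda_1X_1$ and $D_0(X_6)=\lambda_6X_6$ for some $\lambda_1,\lambda_6\in\mathbb{C}$. As a derivation of $U^+$ is determined by its values on the generators $X_1=e_1,X_6=e_2$, and as $D_5$ and $D_6$ are precisely the $Q$-grading derivations attached to the forms $\alpha_1$ and $-\alpha_1+\alpha_2$ (so that $D_5$ and $D_5+D_6$ realize the $\alpha_1$- and $\alpha_2$-gradings), the derivation $(\lambda_1+\lambda_6)D_5+\lambda_6D_6$ agrees with $D_0$ on $X_1,X_6$. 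Therefore $D_0=(\lambda_1+\lambda_6)D_5+\lambda_6D_6\in\mathrm{span}(D_5,D_6)$.

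The substantial part concerns the components $\beta\neq0$. Fix such a $\beta$ and extend $D_\beta$ to $G^1$. Comparing $Q$-degrees in the decomposition $Der(G^1)=ad_{G^1}\oplus\mathcal T$ of the first paragraph, and noting the toral summand has $Q$-degree $0$, we get $\bar D_\beta=ad_{a_\beta}$ for some $a_\beta\in(G^1)_\beta$. What remains is to show that $a_\beta$ can be replaced, modulo the central scalars $\mathbb{C}$, by an element $g_\beta\in U^+_\beta$, so that $D_\beta=ad_{g_\beta}$ is \emph{inner in} $U^+$; this is the main obstacle. My plan is to descend $a_\beta$ through the tower $G^1\supset\cdots\supset G^7=U^+$: at each localization $G^\ell=G^{\ell+1}\Sigma_\ell^{-1}$ one expands $a_\beta$ in the PBW basis $\{(X^{(\ell)})^\gamma\}$ and uses the explicit commutation relations of Lemma \ref{relation}, together with the free-module decomposition of Proposition \ref{3.7} and its level-by-level analogues, to control which negative powers of $T_\ell$ can occur. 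The requirement $ad_{a_\beta}(U^+)\subseteq U^+$ — equivalently $ad_{a_\beta}(X_1),ad_{a_\beta}(X_6)\in U^+$, since $U^+=\langle X_1,X_6\rangle$ — should then force the genuinely non-polynomial contributions to cancel, pushing $a_\beta$ into $U^+$ up to a scalar. Carrying out this denominator bookkeeping one localization at a time is where the bulk of the computation lies.

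Assembling the finitely many nonzero components then yields $D=ad_g+\mu_5D_5+\mu_6D_6$ with $g=\sum_{\beta\neq0}g_\beta\in U^+$ and $(\mu_5,\mu_6)\in\mathbb{C}^2$. For uniqueness, suppose $ad_g+\mu_5D_5+\mu_6D_6=0$ on $U^+$. Taking the $Q$-degree-$0$ part and using $g_0\in U^+_0=\mathbb{C}$, so that $ad_{g_0}=0$, gives $\mu_5D_5+\mu_6D_6=0$; since $D_5,D_6$ act independently on $X_1,X_6$ (the matrix of their values is invertible) we obtain $\mu_5=\mu_6=0$, whence $ad_g=0$ and $g\in Z(U^+)=\mathbb{C}$ by Lemma \ref{center}. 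Thus $\mu_5,\mu_6$ are uniquely determined and $g$ is unique up to a central scalar, which completes the argument.
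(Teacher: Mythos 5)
Your setup coincides with the paper's: embed $U^+=G^7$ in the quantum torus $G^1$ via the tower of localizations, invoke the Osborn--Passman structure of $\mathrm{Der}(G^1)$ (inner plus central/toral), and then try to pull the inner part back into $U^+$. Your handling of the toral part via the $Q$-grading (the observation that $U^+_{\alpha_1}=\mathbb{C}X_1$, $U^+_{\alpha_2}=\mathbb{C}X_6$ forces $D_0$ into $\mathrm{span}(D_5,D_6)$) and your uniqueness argument are correct, and are in fact a cleaner route to the relations among the $\mu_i$ than the paper's fifteen linear equations in Lemmas \ref{lm2}(i) and \ref{lm3}(i).

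However, there is a genuine gap at exactly the point you yourself flag as "the main obstacle": showing that the inner part $ad_{a_\beta}$ (respectively $ad_g$) comes from an element of $U^+$ and not merely of $G^1$. You describe a plan --- expand in PBW bases, track denominators through the localizations, and argue that the requirement $ad_{a_\beta}(U^+)\subseteq U^+$ "should then force the genuinely non-polynomial contributions to cancel" --- but you never execute it, and this descent is the bulk of the theorem's content. A priori nothing prevents an element of $G^1\setminus U^+$ from having an inner derivation that stabilizes $U^+$; ruling this out is where all the specific structure of $U^+_{r,s}(G_2)$ enters. In the paper this is the content of Lemmas \ref{lm1}, \ref{lm2}(ii) and \ref{lm3}(ii): one writes $g=g_++g_-$ with $g_-$ carrying the negative powers of the variable being de-localized, applies $ad_g$ to suitable generators, expands in the PBW basis of $G^{\ell}$, and uses the commutation relations (Lemma \ref{relation}), the linear systems (\ref{zt001})--(\ref{zt0012}), and, at the delicate step from $G^3$ to $G^4$, the free-module decomposition of Proposition \ref{3.7} together with the identity $U_3^{-k}U_1=(r^2s)^kU_1U_3^{-k}-d_kU_2U_3^{-(k+1)}$ of Corollary \ref{3.6}, to conclude $g_-=0$ stage by stage. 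Your $Q$-homogeneous reformulation does not make any of this computation easier or avoidable --- the component $a_\beta$ still lives in the quantum torus and the same denominator analysis is required --- so as it stands the proposal is an outline of the paper's strategy rather than a proof.
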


The proof of Theorem \ref{maintheo} will be completed by several lemmas as follows. Now we assume that $D \in Der(U^+)$ be a derivation of $U^+$. Recall that $U^+=G^{7}$. Due to Lemma \ref{3.3} and Corollary \ref{3.3}, $D$ extends uniquely to a derivation of $G^6$, $G^5$, $G^4$, $G^3$, $G^2$, $G^1$, respectively. We will still denote the extended derivations by $D$.
 Recall that $G^1$ is generated by the monomials $T^\gamma=T_1^{\gamma_1}T_2^{\gamma_2}\cdots T_6^{\gamma_6}, \gamma\in \mathbb{Z}^6$. Thanks to the result in \cite{Osborn1995}, one knows that the derivation $D$ of the quantum torus $G^1$ can be written as the sum of an inner derivation and a central derivation\cite{Osborn1995}. Namely, there are some element $g\in G^1$ such that
 \begin{equation}\label{zht52}
 D={\rm ad}_g+\theta,
 \end{equation}
 where $\theta$ is a central derivation of $G^1$ which is defined by $\theta (T_i)=\mu_i T_i$ with $\mu_i\in Z(G^1)=\mathbb{C}, i=1, \cdots, 6$.
Although $g\in G^1$, but we will prove that $g \in G^2$, $g \in G^3, \cdots, g\in G^7$ step by step. In addition we also will give the relations among $\mu_1, \cdots, \mu_6$. If $g\in \mathbb{C}$ then
 ${\rm ad} g=0$ for which gives a trivial case. Hence we assume that $g\notin \mathbb{C}$ below. We will prove the following lemmas.

\begin{lemma}\label{lm1}
For all $\ell \in [\![1,3]\!]$, we have $g \in G^{\ell}$.
\end{lemma}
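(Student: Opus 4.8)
The goal of Lemma \ref{lm1} is to show that the element $g$ appearing in the decomposition $D = \mathrm{ad}_g + \theta$ of (\ref{zht52}), which a priori lives only in the quantum torus $G^1$, in fact already lies in the smaller algebras $G^3$, $G^2$, and $G^1$ (read from the inside out). The plan is to exploit the tower $G^3 \subset G^2 \subset G^1$ together with the localization relations $G^\ell = G^{\ell+1}\Sigma_\ell^{-1}$ from Lemma \ref{3.3}: concretely, $G^2 = G^3\Sigma_2^{-1}$ is obtained from $G^3$ by inverting $T_2$, and $G^1 = G^2\Sigma_1^{-1}$ by further inverting $T_1$. Since $G^3$ has the PBW basis given by monomials $T^\gamma$ with $\gamma_1,\gamma_2 \in \mathbb{N}$ and $\gamma_3,\dots,\gamma_6 \in \mathbb{Z}$, an element of $G^1$ lies in $G^3$ precisely when its expansion in the $T^\gamma$ involves no negative powers of $T_1$ or $T_2$.

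**Main argument.** I would start by writing $g = \sum_\gamma a_\gamma T^\gamma$ as a finite sum over $\gamma \in \mathbb{Z}^6$ with $a_\gamma \in \mathbb{C}^*$, and argue by contradiction: suppose some $a_\gamma \neq 0$ with $\gamma_1 < 0$ (respectively $\gamma_2 < 0$). The key device is that $D$ must preserve $U^+ = G^7$, and more to the point that its extension must preserve the intermediate Ore-extension structure; the standard way to force integrality is to apply $D$ to a suitable generator $T_i$ and compare the ``lowest-degree'' term in $T_1$ (or $T_2$) on both sides of $D(T_i) = [g,T_i] + \mu_i T_i$. Because the $T_i$ $q$-commute by Lemma \ref{relation}, conjugation acts diagonally: $T^\gamma T_i = q(\gamma,i)\, T_i T^\gamma$ for an explicit monomial $q(\gamma,i)$ in $r,s$, so $[g,T_i] = \sum_\gamma a_\gamma\,(q(\gamma,i)^{\pm 1}-1)\,T_i T^\gamma$. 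I would isolate the monomials of most negative $T_1$-degree (or $T_2$-degree) in $g$ and show that the corresponding coefficient $q(\gamma,i)-1$ cannot vanish for all $i$ simultaneously unless that extremal $\gamma$-component is nonnegative — using precisely the genericity hypothesis that $r^m s^n = 1$ forces $m=n=0$, which is what makes each $q(\gamma,i)=1$ equivalent to a linear equation in the $\gamma_j$'s (the system (\ref{zt001})--(\ref{zt0012})). The leftover extremal term, having no cancellation available from $[g,T_i]$ and not matching the single diagonal term $\mu_i T_i$, forces $D(T_i) \notin G^3$ for some $i$, contradicting $D(U^+)\subseteq U^+ \subseteq G^3$.

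**Execution by stages.** Rather than handling $T_1$ and $T_2$ at once, I would peel them off one at a time, mirroring the tower. First show $g \in G^2$ by ruling out negative powers of $T_1$: apply $D$ to an element that is integral in $T_1$ and examine the most-negative $T_1$-degree component, using that $Z(G^1) = \mathbb{C}$ (Lemma \ref{center}) to pin down that the central part $\theta$ contributes nothing to off-diagonal monomials. Then, working inside $G^2$, repeat the argument with $T_2$ to show $g \in G^3$. The role of the center computation is essential: it guarantees the decomposition (\ref{zht52}) is genuinely unique up to the constant ambiguity and that the only monomials $T^\gamma$ commuting with everything are scalars, so no $q$-commutation coefficient conspires to make an unwanted negative-power term invisible.

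**Anticipated obstacle.** The hard part will be the bookkeeping in the extremal-degree comparison: because the $P_{j,i}$ and the change-of-variable formulas in Lemma \ref{tzlemma1} and Corollary \ref{3.5} introduce several monomials of differing degrees, I must be careful that when I single out ``the lowest $T_1$-power,'' the commutator $[g,T_i]$ does not accidentally produce a matching term from a different $\gamma$. Controlling this requires showing the map $\gamma \mapsto q(\gamma,i)$ separates the relevant monomials, which again reduces to the non-degeneracy of the system of linear forms (\ref{zt001})--(\ref{zt0012}) under the $r^ms^n=1\Rightarrow m=n=0$ assumption. Making the ``no cancellation at the extreme'' step airtight — rather than merely plausible — is where the real work lies, and it is the step I would write out most carefully.
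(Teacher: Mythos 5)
Your proposal follows essentially the same route as the paper's proof: expand $g$ in the PBW basis of the quantum torus, isolate the part involving negative powers of $T_1$ (then of $T_2$), use that the extended derivation preserves $G^2$ (then $G^3$) so that the $q$-commutator of that part with each $T_j$, $j\neq 1$ (resp.\ $j\neq 2$), must vanish by PBW-independence, and invoke the genericity assumption on $r,s$ so that the vanishing conditions become the linear system (\ref{zt003})--(\ref{zt0012}) (resp.\ (\ref{zt001}), (\ref{zt002}), (\ref{zt005})--(\ref{zt0012})), which forces $\gamma=0$ and hence a contradiction with $0\notin\xi$. Two small points the execution must fix: the contradiction is with the invariance of the localization $G^2$ (resp.\ $G^3$) under the extended derivation, not with $D(U^+)\subseteq U^+$ (the $T_i$ do not lie in $U^+$), and your anticipated cancellation obstacle is vacuous, since distinct $\gamma$ produce distinct PBW monomials $T^{\gamma+\epsilon_j}$, so each coefficient is forced to vanish individually.
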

\begin{proof}
The case $\ell=1$ is trivial. We first prove that $g\in G^2$.
For any $z \in \mathbb{C}=Z(G^1)$, it easy to see that $ad_g=ad_{g+z}$. This allows us write $g\in G^1$ as
\begin{eqnarray*}
g=\sum_{\gamma \in \xi}c_{\gamma}T^\gamma=\sum_{\gamma \in \xi}c_{\gamma}T_1^{\gamma_1}T_2^{\gamma_2}\cdots T_6^{\gamma_6}
\end{eqnarray*}
where $c_{\gamma} \in \mathbb{C}$ and $\xi$ is a finite subset of $\mathbb{Z}^6$ with $0\not\in \xi$.
We below will prove that $\gamma_1 \geq 0$ for all $\gamma=(\gamma_1, \cdots, \gamma_6)\in \xi$, and so that $g$ will belong to $G^2$. Set $g=g_++g_-$ where
\begin{equation*}
g_+=\sum_{\gamma \in \xi, \gamma_1 \geq 0}c_{\gamma}T^{\gamma}\in G^2
\end{equation*}
and
\begin{equation*}\label{x_-1}
g_-=\sum_{\gamma \in \xi, \gamma_1 < 0}c_{\gamma}T^{\gamma}.
\end{equation*}
we shall prove that $g_-=0$.
Since $G^2$ is generated by $T_1$, $T_2^{\pm 1}$, $T_3^{\pm 1}$, $T_4^{\pm 1}$, $T_5^{\pm 1}$, $T_6^{\pm 1}$ and $D$ extends uniquely to a derivation of $G^2$, then for any $j \in [\![2,6]\!]$ we must have
\begin{eqnarray*}
D(T_j)&=&ad_g(T_j)+\theta (T_j)\\
&=&g_+T_j-T_jg_{+}+g_-T_j-T_jg_-+\mu_jT_j \in G^2.
\end{eqnarray*}
 Clearly, by $g_+, T_j\in G^2$ we have
\begin{equation}\label{u1}
g_-T_j-T_jg_- \in G^2.
\end{equation}
Using the commutation relation among $T_k$, (\ref{u1}) becomes
\begin{equation}\label{u2}
\sum_{\gamma \in \xi, \gamma_1 < 0}c_{j, \gamma}'c_{\gamma}T^{\gamma+\epsilon_j} \in G^2
\end{equation}
where $\epsilon_j$ denotes the $j$-th element of the canonical basis of $\mathbb{Z}^6$ and $c_{j, \gamma}' \in \mathbb{C}$.
Recall that $T^{\gamma}$ with $\gamma \in \mathbb{N}^1\times \mathbb{Z}^5$ forms a PBW basis of $G^2$. Since $0\not\in \xi$, we must have $c_{j, \gamma}'c_{\gamma}=0$ and thus we have
$$g_-T_j-T_jg_-=\sum_{\gamma \in \xi, \gamma_1 < 0}c_{j, \gamma}'c_{\gamma}T^{\gamma+\epsilon_j}=0$$
 for all $j \neq 1$. In other words, $g_-$ commutes with those $T_j$ for all $j \neq 1$.   This, together with (\ref{x_-1}), gives that $c_\gamma T_j T^\gamma=c_\gamma T^\gamma T_j$ for every $j\in [\![2,6]\!]$, where $\gamma$ is given by the sum $g_-=\sum_{\gamma \in \xi, \gamma_1 < 0}c_{\gamma}T^{\gamma}$. If $c_\gamma\neq 0$, then similar to the proof of Lemma \ref{center} we obtain that Equations (\ref{zt003})-(\ref{zt0012}) hold which implies $\gamma=0$. This impossible since $0\notin \xi$. Therefore, every $c_\gamma$ is equal to zero. In other words, $g_{-}=0$, that is $g=g_+\in G^2$.

Next, we prove that $g\in G^{3}$. As we already see that  $g\in G^2$, so we can write $g$ as $g_=\sum_{\gamma \in \xi' }c_{\gamma}T^{\gamma}$, where $\xi'\subseteq \mathbb{N}^1 \times \mathbb{Z}^{5}$. Set
\begin{equation*}
g_+=\sum_{\gamma \in \xi', \gamma_2 \geq 0}c_{\gamma}T^{\gamma}, \ \ g_-=\sum_{\gamma \in \xi', \gamma_2 < 0}c_{\gamma}T^{\gamma}.
\end{equation*}
Similarly, we can get $g_-$ commutes with $T_j$ such that $j \neq 2$. Note that the system of Equations (\ref{zt001}), (\ref{zt002}) and (\ref{zt005})-(\ref{zt0012}) for $\gamma_1, \cdots, \gamma_6$ has only zero solutions. In the same way with the above proof process, we also get $g_-=0$ which yields $g=g_+\in G^3$.
\end{proof}

\begin{lemma}\label{lm2}
(i)\ $\mu_2-\mu_3-\mu_1=0$. \ \ (ii)\ $g \in G^4$.\ \

(iii)\ $D(U_i)=ad_g(U_i)+\mu_i U_i$ for all $i \in [\![1, 6]\!]$.
\end{lemma}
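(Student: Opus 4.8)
The plan is to exploit the embedding tower together with the special role played by the change of variables on the first index. By Lemma \ref{lm1} we already know $g\in G^3$, and by Lemma \ref{3.3} we have $G^3=G^4\Sigma_3^{-1}$, the localization of $G^4$ at the powers of $U_3=T_3$. Using Proposition \ref{3.7} together with Lemma \ref{3.3}, I would regard $G^3$ as a free left $G$-module with basis $(U_3^c)_{c\in\mathbb{Z}}$, where $G=\langle U_1,U_2,U_4^{\pm1},U_5^{\pm1},U_6^{\pm1}\rangle$ and $G^4$ is exactly the part with $c\ge0$; thus $g=\sum_{c\in\mathbb{Z}}a_cU_3^c$ with $a_c\in G$. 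Splitting $g=g_++g_-$ with $g_+=\sum_{c\ge0}a_cU_3^c\in G^4$ and $g_-=\sum_{c<0}a_cU_3^c$, the whole lemma will follow once I show $g_-=0$: this gives (ii), a single leftover scalar equation gives (i), and (iii) is then immediate.

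First I would compute how the central derivation $\theta$ acts in the $U$-coordinates. Since $\theta(T_i)=\mu_iT_i$ and, by Lemma \ref{tzlemma1}, $U_i=T_i$ for $i\ge2$ while $U_1=T_1+(1-r^{-1}s)^{-1}U_2U_3^{-1}$, a short computation (using $\theta(T_3^{-1})=-\mu_3T_3^{-1}$) gives
\[
\theta(U_1)=\mu_1U_1+(1-r^{-1}s)^{-1}(\mu_2-\mu_3-\mu_1)\,U_2U_3^{-1}.
\]
The crucial feature is that $\theta$ preserves $G^4$ on every generator except possibly $U_1$, where a genuinely negative $U_3$-power appears. Now apply $D={\rm ad}_g+\theta$ to $U_1$. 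Because $D$ restricts to a derivation of $G^4$ and $U_1\in G^4$, we have $D(U_1)\in G^4$; likewise $g_+\in G^4$ gives ${\rm ad}_{g_+}(U_1)\in G^4$. Comparing the components of negative $U_3$-degree in $D(U_1)={\rm ad}_{g_+}(U_1)+{\rm ad}_{g_-}(U_1)+\theta(U_1)$, all the $G^4$-parts drop out and we are left with
\[
{\rm ad}_{g_-}(U_1)=g_-U_1-U_1g_-=-(1-r^{-1}s)^{-1}(\mu_2-\mu_3-\mu_1)\,U_2U_3^{-1}.
\]

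Finally I would analyse the left-hand side by its lowest $U_3$-degree via Corollary \ref{3.6}. Expanding $g_-U_1=\sum_{c<0}a_c\big((r^2s)^{-c}U_1U_3^{c}-d_{-c}U_2U_3^{c-1}\big)$ and $U_1g_-=\sum_{c<0}\kappa_c\,a_cU_1U_3^{c}$ (here $U_1$ quasi-commutes with the generators of $G$ by the same scalars as $T_1$ does, which I would check from Lemma \ref{relation}), every term has strictly negative $U_3$-degree. Suppose $g_-\ne0$ and let $c_0<0$ be its lowest occurring degree; then the component of degree $c_0-1$ on the left is $-d_{-c_0}\,a_{c_0}U_2$, arising solely from the $d$-term of Corollary \ref{3.6}, whereas the right-hand side has only degree $-1$. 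Since $d_k\ne0$ for all $k\ge1$ (which follows from the standing genericity assumption on $r,s$) and $U_2$ is a non-zero-divisor, freeness of the module forces $a_{c_0}=0$, a contradiction; hence $g_-=0$ and $g\in G^4$, proving (ii). The displayed identity then collapses to $0=-(1-r^{-1}s)^{-1}(\mu_2-\mu_3-\mu_1)U_2U_3^{-1}$, giving (i). Statement (iii) is then immediate, since $D(U_i)={\rm ad}_g(U_i)+\theta(U_i)$ with $\theta(U_i)=\mu_iU_i$ for $i\ge2$ by construction and for $i=1$ by (i). The main obstacle is precisely the non-trivial change of variables $U_1=T_1+(1-r^{-1}s)^{-1}U_2U_3^{-1}$: it is what makes $\theta$ fail to preserve $G^4$ and forces one to control the products $U_3^{c}U_1$ through Corollary \ref{3.6}; extracting both (i) and (ii) from this one lowest-degree comparison is the heart of the argument.
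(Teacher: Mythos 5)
Your proposal is correct and takes essentially the same approach as the paper: the same decomposition $g=g_++g_-$ coming from the free $G$-module structure of Proposition \ref{3.7}, the same computation of $\theta(U_1)$ from $U_1=T_1+(1-r^{-1}s)^{-1}U_2U_3^{-1}$, and the same lowest-$U_3$-degree contradiction based on the $d_k$-term of Corollary \ref{3.6}. The only (cosmetic) difference is the order: you first force $g_-=0$ and then read off (i) from the surviving degree $-1$ term, whereas the paper extracts $\mu_2-\mu_3-\mu_1=0$ from the coefficient of $U_2U_3^{-1}$ first and then runs the same lowest-degree argument for (ii).
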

\begin{proof}
(i)\ By Lemmas \ref{lm1} and \ref{relation}, we can set $g_-=\sum_{\gamma \in \xi''}c_{\gamma}U^{\gamma}$, where $\xi'' \subseteq \mathbb{N}^{2} \times \mathbb{Z}^4$ with $0 \not \in \xi''$. Set $g_+=\sum_{\gamma \in \xi'', \gamma_3 \geq 0}c_{\gamma}U^{\gamma}$ and $g_-=\sum_{\gamma \in \xi'', \gamma_3<0}c_{\gamma}U^{\gamma}$. From proposition \ref{3.7}, $g_-$ can be written as
\begin{equation}
g_-=\sum_{c=c_0}^{-1}b_cU^{c}_3
\end{equation}
where $c_0 < 0$ and $b_c \in G$.
 Since $D$ can extend uniquely to a derivation of $G^4$, then $D(U_1)\in G^{4}$. From corollary \ref{3.5},  $U_1=T_1+(1-r^{-1}s)^{-1}T_2T_3^{-1}$, which implies
\begin{eqnarray*}\label{D(U1)}
D(U_1)&=&ad_g(U_1)+ \theta(U_1)\\
&=&g_+U_1-U_1g_++g_-U_1-U_1g_-+\mu_1 T_1+(1-r^{-1}s)^{-1}(\mu_2-\mu_3)T_2T_3^{-1}\in G^4.
\end{eqnarray*}
Furthermore, we have
\begin{eqnarray*}
&&g_-U_1-U_1g_-+\mu_1 T_1+(1-r^{-1}s)^{-1}(\mu_2-\mu_3)T_2T_3^{-1}\\\
&=&\sum_{c=c_0}^{-1}b_cU^c_3U_1-\sum_{c=c_0}^{-1}U_1b_cU^{c}_3+\mu_1U_1+(1-r^{-1}s)^{-1}(\mu_2-\mu_3-\mu_1)T_2T_3^{-1} \in G^4.
\end{eqnarray*}

Since $U^{\gamma}$ with $\gamma \in \mathbb{N}^3 \times \mathbb{Z}^3$ forms a PBW basis of $G^4$, it follows that
\begin{equation*}
\mu_2-\mu_3-\mu_1=0, \ \ \ \ \ \sum_{c=c_0}^{-1}b_cU_3^cU_1-\sum_{c=c_0}^{-1}U_1b_cU_3^c \in G^4.
\end{equation*}

(ii)\ We prove that $g \in G^4$. Suppose that $g_- \neq 0$, then there is some $b_{c_0} \neq 0$. By Corollary \ref{3.6} we have
\begin{eqnarray*}
g_-U_1-U_1g_-&=&\sum_{c=c_0}^{-1}b_cU^{c}_3U_1-U_1\sum_{c=c_0}^{-1}b_cU^{c}_3\\
&=&\sum_{c=c_0}^{-1}(r^{-2c}s^{-c}b_cU_1-U_1b_c)U_3^c-\sum_{c=c_0}^{-1}d_{-c}b_cU_2U_3^{c-1}\in G^4.
\end{eqnarray*}
Hence, we must have $b_{c_0}=0$. This is a contradiction.

(iii)\ The case for $i \neq 1$ is trivial since $U_i=T_i$ and $D(T_i)=ad_g(T_i)+\mu_iT_i$. Due to $U_1=T_1+(1-r^{-1}s)^{-1}T_2T_3^{-1}$ and $\mu_1=\mu_2-\mu_3$, we obtain
\begin{eqnarray*}
D(U_1)&=&ad_g(U_1)+\mu_1T_1+(1-r^2s)^{-1}(\mu_2-\mu_3)T_2T_3^{-1}\\
&=&ad_g(U_1)+\mu_1T_1+(1-r^2s)^{-1}\mu_1T_2T_3^{-1}\\
&=&ad_g(U_1)+\mu_1U_1.
\end{eqnarray*}
This completes the proof.
\end{proof}

Similarly, we can prove the following lemma:
\begin{lemma}\label{lm3}

(i)\ $3\mu_3-\mu_4-\mu_2=0$, $2\mu_3-\mu_4-\mu_1=0$, $\mu_4-\mu_5-\mu_3=0$, $2\mu_3-\mu_5-\mu_2=0$, $\mu_3+\mu_4-2\mu_5-\mu_2=0$, $2\mu_4-3\mu_5-\mu_2=0$, $\mu_3-\mu_5-\mu_1=0$, $\mu_4-2\mu_5-\mu_1=0$, $\mu_5+\mu_3-\mu_6-\mu_2=0$, $\mu_4-\mu_6-\mu_2=0$, $3\mu_5-2\mu_6-\mu_2=0$, $\mu_5-\mu_6-\mu_1=0$, $3\mu_5-\mu_6-\mu_4=0$, $2\mu_5-\mu_6-\mu_3=0$.

(ii)\ $g \in G^{\ell}$ for all $\ell \in [\![5, 7]\!]$.

(iii)\ $D(A_i)=ad_g(A_i)+\mu_iA_i$ for all $ i \in [\![1, 6]\!]$ and $A \in \{Z, Y, X\}$.
\end{lemma}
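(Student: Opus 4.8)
The plan is to reproduce the three-part structure of Lemma~\ref{lm2}, now running it through the successive inclusions $G^{4}\subset G^{5}\subset G^{6}\subset G^{7}$, that is, for $\ell=4,5,6$ in turn. Throughout we keep the decomposition $D=\mathrm{ad}_{g}+\theta$, where $\theta$ is the central derivation $\theta\big(X_{j}^{(\ell)}\big)=\mu_{j}X_{j}^{(\ell)}$, and at the stage $G^{\ell}\subset G^{\ell+1}$ we assume $g\in G^{\ell}$ from the previous stage, the base case $g\in G^{4}$ being Lemma~\ref{lm2}(ii).

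First I would read off the relations in (i). Since $D$ extends to a derivation of $G^{\ell+1}$ and each generator $A_{i}=X_{i}^{(\ell+1)}$ lies in $G^{\ell+1}$, we have $D(A_{i})\in G^{\ell+1}$. Corollary~\ref{3.5} writes $A_{i}=X_{i}^{(\ell)}+(\text{extra terms})$, in which every extra term is a monomial carrying a negative power of $X_{\ell}^{(\ell)}$ and hence lies outside the PBW basis of $G^{\ell+1}$. Applying $\theta$ and subtracting $\mu_{i}A_{i}$, each such monomial $M=\prod_{j}\big(X_{j}^{(\ell)}\big)^{a_{j}}$ survives with coefficient proportional to $\big(\sum_{j}a_{j}\mu_{j}-\mu_{i}\big)$. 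Separating, as in the proof of Lemma~\ref{lm2}(i), the purely central contribution from the commutator contribution inside the PBW basis, the requirement $D(A_{i})\in G^{\ell+1}$ forces each such coefficient to vanish. Reading off weights then gives precisely the listed equations: the step $\ell=4$ uses $Z_{1},Z_{2}$, whose extra monomials $U_{3}^{2}U_{4}^{-1}$ and $U_{3}^{3}U_{4}^{-1}$ yield $2\mu_{3}-\mu_{4}-\mu_{1}=0$ and $3\mu_{3}-\mu_{4}-\mu_{2}=0$; the step $\ell=5$ uses $Y_{1},Y_{2},Y_{3}$ with the monomials $Z_{3}Z_{5}^{-1}$, $Z_{4}Z_{5}^{-2}$, $Z_{3}^{2}Z_{5}^{-1}$, $Z_{3}Z_{4}Z_{5}^{-2}$, $Z_{4}^{2}Z_{5}^{-3}$, $Z_{4}Z_{5}^{-1}$, giving the middle six equations; and the step $\ell=6$ uses $X_{1},X_{2},X_{3},X_{4}$ with $Y_{5}Y_{6}^{-1}$, $Y_{5}Y_{3}Y_{6}^{-1}$, $Y_{4}Y_{6}^{-1}$, $Y_{5}^{3}Y_{6}^{-2}$, $Y_{5}^{2}Y_{6}^{-1}$, $Y_{5}^{3}Y_{6}^{-1}$, giving the last six.

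For (ii) I would, at each stage, expand $g$ in the PBW basis of $G^{\ell}$ and split $g=g_{+}+g_{-}$ according to the sign of the exponent of $X_{\ell}^{(\ell)}$, so that $g_{+}\in G^{\ell+1}$ and it remains only to show $g_{-}=0$. As in Lemma~\ref{lm2}(ii) this rests on two auxiliary facts: the analogue of Proposition~\ref{3.7} (that $G^{\ell+1}$ is free as a left module over the subalgebra generated by the remaining variables, with basis the powers of $X_{\ell}^{(\ell)}$), and the analogue of Corollary~\ref{3.6} (a commutation relation showing that moving a suitable generator of $G^{\ell+1}$ past $\big(X_{\ell}^{(\ell)}\big)^{-1}$ strictly lowers the $X_{\ell}^{(\ell)}$-degree). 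Granting these, the lowest-degree term of the commutator $g_{-}A_{i}-A_{i}g_{-}$ appears with a nonzero scalar times the leading coefficient of $g_{-}$ and cannot be absorbed inside $G^{\ell+1}$; hence that leading coefficient must vanish, and descending on the degree yields $g_{-}=0$, that is, $g\in G^{\ell+1}$. Part (iii) is then immediate: with the relations of (i) and $g\in G^{\ell+1}$ in hand, one substitutes $D(A_{i})=\mathrm{ad}_{g}(A_{i})+\theta(A_{i})$ and uses the relations to collapse $\theta(A_{i})$ to $\mu_{i}A_{i}$, exactly as in Lemma~\ref{lm2}(iii).

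The main obstacle is the bookkeeping in the two longer steps rather than any new idea. The formulas for $Y_{2}$ and $X_{2}$ in Corollary~\ref{3.5} contain non-commuting factors such as $Z_{4}Z_{5}^{-1}Z_{4}Z_{5}^{-2}$, which must be normal-ordered before their $\mu$-weights can be read off, and one must check that after normal-ordering they collapse to the monomials listed above with the stated weights. Equally delicate is verifying the degree-lowering commutation relations replacing Corollary~\ref{3.6} for the generators paired with $U_{4}^{-1}$, $Z_{5}^{-1}$ and $Y_{6}^{-1}$, since these are exactly what keep the leading coefficient of $g_{-}$ from being cancelled. The conceptual skeleton, however, is identical to that of Lemma~\ref{lm2}.
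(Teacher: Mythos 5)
Your proposal is correct and is essentially the paper's own argument: the paper proves Lemma \ref{lm3} simply by iterating the proof of Lemma \ref{lm2} through the remaining localization steps, exactly as you describe, and your accounting of the fourteen weight equations (two from $Z_1,Z_2$ via the monomials $U_3^2U_4^{-1}$, $U_3^3U_4^{-1}$; six from $Y_1,Y_2,Y_3$; six from $X_1,\dots,X_4$) matches Corollary \ref{3.5}, as does your use of analogues of Proposition \ref{3.7} and Corollary \ref{3.6} for the $g_\pm$ splitting. One notational slip: the paper's tower runs $G^{7}\subset G^{6}\subset\cdots\subset G^{4}$ (localization enlarges the algebra as the index decreases), so the inclusions you write should be reversed, though your induction --- assuming $g\in G^{\ell}$ and proving $g\in G^{\ell+1}$ --- proceeds in the correct direction.
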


Now we ready to give the proof of Theorem \ref{maintheo}.

{\bf The proof of Theorem \ref{maintheo}:}
Suppose that $D$ is a derivations of $U^+$.  Based on the discussion at the beginning of this section with Lemmas \ref{lm1}-\ref{lm3}, we know that $D$
can be written as
\begin{eqnarray*}
D=ad_g+\theta,
\end{eqnarray*}
where $g \in U^+$ and $\theta$ is a derivation of $U^+$ determined by $\theta(X_i)=\mu_i X_i, i\in [\![1, 6]\!]$, in which $\mu_1, \cdots, \mu_6\in \mathbb{C}$ satisfying $15$ Equations given by (i) of Lemma \ref{lm2} and (i) of Lemma \ref{lm3}. By a simple computation, we obtain that
$$
\mu_1 = \mu_5-\mu_6,  \mu_2 = 3 \mu_5-2 \mu_6,  \mu_3 = 2 \mu_5-\mu_6, \mu_4 = 3 \mu_5-\mu_6,
$$
and $\mu_5,\mu_6$ are any complex numbers. Now let  $D_5$ and $D_6$ be two derivations of $U_+$ given by Definition \ref{XGL}, then it easy to see that $D$
can be written as
\begin{eqnarray*}
D=ad_g+\mu_5D_5+\mu_6 D_6.
\end{eqnarray*}
  Next, we show that the decompositions is unique. Assume that $ad_g+\mu_5D_5+\mu_6D_6=0$ as a derivation of $U^+$, to finish the proof, we need to show that $\mu_5=\mu_6=0$ and $ad_g=0$.
   Let us set a derivation $\theta:=\mu_5D_5+\mu_6D_6\in {\rm Der}(U^+)$. Then $\theta$ uniquely extends to a derivation $\tilde{\theta}$ of the quantum torus $G^1$ and $ad_g+\widetilde{\theta}=0$.
In addition, we can also get

$\tilde{\theta}(T_1)=(\mu_5-\mu_6)T_1, \qquad \ \, \tilde{\theta}(T_2)=(3\mu_5-2\mu6)T_2, \qquad \ \ \tilde{\theta}(T_3)=(2\mu_5-\mu_6)(T_3),$

$\tilde{\theta}(T_4)=(3\mu_5-\mu_6)T_4, \qquad  \tilde{\theta}(T_5)=(\mu_5)T_5, \qquad\qquad\qquad    \tilde{\theta}(T_6)=(\mu_6)T_6$.

Hence  $\tilde{\theta}$ is a central derivation of $G^1$. Thanks to \cite{Osborn1995}, we can obtain $ad_g=0=\theta$ and thus $\mu_5=\mu_6=0$.  This proves
the uniqueness of the decomposition of the derivation $D$.
The proof of Theorem \ref{maintheo} is completed.

\subsection{The Hochschild cohomology group of degree 1 of $U^+$}

Recall that the Hochschild cohomology group of degree 1 of $U^+$ is denote by $HH^{1}(U^+)$, which defined by
\begin{equation*}
HH^1(U^+):={\rm Der}(U^+)/{\rm Inn Der}(U^+),
\end{equation*}
where ${\rm Inn Der}(U^+):=\{ad_g| g \in U^+\}$ is the Lie algebra of inner derivations of $U^+$. It is easy to see that $HH^1(U^+)$ is a module over $HH^0(U^+):=Z(U^+)=\mathbb{C}$. In particular, we have the following theorem:
\begin{theorem}
The first Hochschild cohomology group $HH^1(U^+)$ of $U^+$ is a two-dimensional vector space spanned by ($\overline{D_5}$, $\overline{D_6}$).
\end{theorem}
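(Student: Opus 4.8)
The plan is to derive the statement directly from Theorem \ref{maintheo}, which already expresses every derivation $D$ of $U^+$ uniquely as $D = \mathrm{ad}_g + \mu_5 D_5 + \mu_6 D_6$ with $g \in U^+$ and $\mu_5, \mu_6 \in \mathbb{C}$. Since the quotient map $\pi : \mathrm{Der}(U^+) \to HH^1(U^+) = \mathrm{Der}(U^+)/\mathrm{InnDer}(U^+)$ kills exactly the inner derivations, the first task is to show that $\pi$ sends $D$ to the class $\mu_5 \overline{D_5} + \mu_6 \overline{D_6}$, so that $\{\overline{D_5}, \overline{D_6}\}$ spans $HH^1(U^+)$. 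This is immediate from the decomposition in Theorem \ref{maintheo}: applying $\pi$ to $D = \mathrm{ad}_g + \mu_5 D_5 + \mu_6 D_6$ gives $\overline{D} = \mu_5 \overline{D_5} + \mu_6 \overline{D_6}$ because $\overline{\mathrm{ad}_g} = 0$ for every $g \in U^+$.

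Next I would establish that the two classes $\overline{D_5}$ and $\overline{D_6}$ are linearly independent over $\mathbb{C}$, which together with the spanning property yields that $HH^1(U^+)$ has dimension exactly two. Suppose $\mu_5 \overline{D_5} + \mu_6 \overline{D_6} = 0$ in $HH^1(U^+)$; this means $\mu_5 D_5 + \mu_6 D_6 = \mathrm{ad}_g$ for some $g \in U^+$, i.e. $\mathrm{ad}_{-g} + \mu_5 D_5 + \mu_6 D_6 = 0$ as a derivation of $U^+$. This is precisely the situation handled in the uniqueness portion of the proof of Theorem \ref{maintheo}, where it is shown that any relation $\mathrm{ad}_g + \mu_5 D_5 + \mu_6 D_6 = 0$ forces $\mu_5 = \mu_6 = 0$ (and $\mathrm{ad}_g = 0$). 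Hence $\mu_5 = \mu_6 = 0$, proving linear independence of $\overline{D_5}$ and $\overline{D_6}$.

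The main obstacle — and really the only nontrivial point — is confirming that $\overline{D_5}$ and $\overline{D_6}$ are not themselves inner, which is exactly what the independence argument above guarantees as its special cases $(\mu_5,\mu_6)=(1,0)$ and $(0,1)$; the uniqueness argument of Theorem \ref{maintheo} reduces this to the fact that a nonzero central derivation of the quantum torus $G^1$ cannot be inner, a consequence of the structure theory of derivations of quantum tori from \cite{Osborn1995} together with $Z(G^1) = \mathbb{C}$ established in Lemma \ref{center}. Since all of this machinery is already in place, the proof of the theorem is a short formal consequence: spanning comes from the decomposition, and two-dimensionality comes from the uniqueness of that decomposition. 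I would therefore write the argument as a brief two-step verification (spanning, then independence), citing Theorem \ref{maintheo} for both halves, and conclude that $HH^1(U^+) = \mathbb{C}\,\overline{D_5} \oplus \mathbb{C}\,\overline{D_6}$ is a two-dimensional $\mathbb{C}$-vector space.
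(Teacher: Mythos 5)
Your proposal is correct and follows essentially the same route as the paper: the paper presents this theorem as an immediate consequence of Theorem \ref{maintheo}, with spanning coming from the existence of the decomposition $D=\mathrm{ad}_g+\mu_5 D_5+\mu_6 D_6$ and two-dimensionality from its uniqueness, which in turn rests (as you note) on the Osborn--Passman result that a nonzero central derivation of the quantum torus $G^1$ cannot be inner. No gaps; your two-step write-up (spanning, then independence via the uniqueness argument) is exactly the intended proof.
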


\vspace{1cm}
\noindent Yongye Zhong\\
School of Science,\\
Harbin Engineering University,\\
 Harbin 150001, China\\
{\it E-mail address}: 845630692@qq.com

\vspace{4mm}

\noindent Xiaomin Tang\\
School of  Mathematical Science,\\
Heilongjiang University, \\
  Harbin 150080, China\\
\emph{and}\\
School of Science,\\
Harbin Engineering University,\\
 Harbin 150001, China\\
{\it E-mail address}: x.m.tang@163.com

\end{document}